\theoremstyle{nonumberplain}
\newtheorem{proof}{Proof.}
\def\bc{\begin{center}}
\def\ec{\end{center}}
\def\bel{\begin{equation}}
\def\enl{\end{equation}}
\def\be{\begin{eqnarray*}}
\def\en{\end{eqnarray*}}
\newtheorem{Th}{Theorem}[section]
 \newtheorem{Lem}{Lemma}[section]
\newtheorem{Pro}{Proposition}[section]
\newtheorem{Def}{Definition}[section]
\newtheorem{Rem}{Remark}[section]
\def\R{{\mathbb{R}}}
\def\H{{\mathbb{H}}}
\def\F{{\mathcal {F}}}
\begin{document}

\begin{frontmatter}

\title{Prolate Spheroidal Wave Functions Associated with the Quaternionic Fourier Transform \tnoteref{mytitlenote}}
%\tnotetext[mytitlenote]{Fully documented templates are available in the elsarticle package on \href{http://www.ctan.org/tex-archive/macros/latex/contrib/elsarticle}{CTAN}.}
\author[firstaddress]{Cuiming~Zou}
%\ead{zoucuiming2006@163.com}

\author[mymainaddress1]{Kit Ian Kou\corref{mycorrespondingauthor}}
\cortext[mycorrespondingauthor]{Corresponding author}
%\ead{kikou@umac.mo}

\author[mymainaddress2]{Joao Morais}
%\ead{joao.morais@itam.mx}

\address[firstaddress]{Department of Mathematics, Faculty of Science and Technology,
University of Macau, Taipa, Macao, China. Email: zoucuiming2006@163.com}
\address[mymainaddress1]{Department of Mathematics, Faculty of Science and Technology,
University of Macau, Taipa, Macao, China. Email:kikou@umac.mo}
\address[mymainaddress2]{J. Morais is with the Departamento de Matem\'aticas,
Instituto Tecnol\'ogico Aut\'onomo de M\'exico
R\'io Hondo \#1, Col. Progreso Tizap\'an, M\'exico, DF 01080, M\'exico. Email:joao.morais@itam.mx}

\begin{abstract}
One of the fundamental problems in communications is finding the energy distribution of signals in time and frequency domains.
It should, therefore, be of great interest to find the most energy concentration hypercomplex signal.
The present paper finds a new kind of hypercomplex signals whose energy concentration
is maximal in both time and frequency under quaternionic Fourier transform.
The new signals are a generalization of the prolate spheroidal wave functions (also known as Slepian functions)
to quaternionic space, which are called quaternionic prolate spheroidal wave functions.
The purpose of this paper is to present the definition and properties of the quaternionic prolate spheroidal wave functions and
to show that they can reach the extreme case in energy concentration problem both from the theoretical and experimental description.
In particular, these functions are shown as an effective method for bandlimited signals extrapolation problem.
\end{abstract}

\begin{keyword}
Quaternionic analysis \sep quaternionic Fourier transform \sep hypercomplex signal \sep
energy concentration problem \sep quaternionic prolate spheroidal wave functions \sep bandlimited extrapolation
\MSC[2016] 00-01\sep  99-00
\end{keyword}
\end{frontmatter}

\linenumbers
%-----------------------------------------------------------------------------------------------------------------------------
\section{Introduction}
\label{S1}
The energy distribution problem \cite{R1968,TP1987}
is one of the fundamental problems in communication engineering.
It aims at finding the energy distribution of signals  in time and frequency domains.
In particular, researchers take particular note of finding the signals
with maximum energy concentration in both the time and frequency domains simultaneously.
In the early 1960's, D. Slepian, H. Landau and H. Pollak \cite{SP1961, LP1961,LP1962,S1964}
have found that the prolate spheroidal wave functions (PSWFs) are the most optimal energy concentration functions
in a Euclidean space of finite dimension. The PSWFs, also known as Slepian functions,
are bandlimited and exhibit interesting orthogonality relations.
They are normalized versions of the solutions to the Helmholtz wave equation in prolate spheroidal coordinates.
From then on, theory and numerical applications of these functions have been developed rapidly.
They are often regarded as somewhat mysterious, with no explicit or standard representation
in terms of elementary functions and too difficult to compute numerically.

The one-dimensional PSWFs have so far mainly been developed in two different directions.
One is that fast and highly accurate methods have been deeply developed for the approximation of the PSWFs
and their eigenvalues \cite{LW1980,MC2004,KM2008,S1967,WS2005}.
From a computational point of view, the PSWFs provide a natural and efficient tool for
computing with bandlimited functions defined on an interval \cite{OR2012,K2010}.
They are preferable to classical polynomial bases (such as Legendre and Chebychev polynomials).
On the other hand, analytical properties of the PSWFs have been proposed of the more general context of
a variety of function spaces such as hypercomplex and Cliffordian spaces and
under different integral transforms \cite{MKZ2013,MK2014,Z2007,WS2004}.
The frequency domain has indeed been considered not only under the Fourier transform, but also under
more general ones such as the fractional Fourier transform \cite{Z2014, PD2005},
linear canonical transform (LCT) \cite{ZRMT2010, ZWSW2014} and  so forth.
Over the past few years the PSWFs have come to play a very active part in some problems
arising from applications to physical sciences and engineering, such as wave scattering,
signal processing, and antenna theory. These applications have stimulated a surge of new ideas and methods,
both theoretical and applied, and have reawakened an interest in approximation theory,
potential theory and the theory of partial differential equations.

In the present paper, we discuss the energy concentration problem of bandlimited quaternionic signals under
the quaternionic Fourier transform, which is a generalization of the Fourier transform to
quaternionic signals \cite{BHHA2008,E1993,H2007,HM2008}.
Quaternions were first applied to the Fourier transform within the general context of
solving problems in the nuclear magnetic resonance imaging by R. R. Ernst in the 1980s \cite{EBW1987}.
In the quaternionic language the Hamiltonian multiplication rules \cite{S1979}
make the underlying elements of having a non-commutative property
leading to the main difficulty for the analysis of some basic properties of those elements
in certain spaces.  To the best of our knowledge, there are no previous works considering the energy concentration problem
in a non-commutative structure as in the case of the quaternionic (or, more general, the Clifford) algebra.
This understanding can be the basis for more generalizations.
For this reason, the purpose of the present paper is to develop the energy extremal properties
between the time and frequency domains involving quaternionic signals.
We find that the quaternionic prolate spheroidal wave functions
(QPSWFs) preserve the high energy concentration in both the time and frequency domains
under the quaternionic Fourier transform.
Although we concentrate mainly on Fourier bandlimited signals,
it however might also be applied for LCT bandlimited cases as is shown
in one of our preceding papers \cite{MKZ2013}.

The body of the present paper will cover the following sequence of topics:
In Section \ref{S2}, we collect some basic concepts of quaternionic analysis and quaternionic Fourier transform to be used throughout the paper.
Section \ref{S3} introduces the QPSWFs.
The QPSWFs are ideally suited to study certain questions regarding the relationship
between quaternionic signals and their Fourier transforms.
We prove that the QPSWFs are orthogonal and complete on both
the square integrable space of finite interval and the two-dimensional
Paley-Wiener space of bandlimited quaternionic signals.
Section \ref{S4} describes the time-limited and bandlimited quaternionic signals and
their corresponding properties using the QPSWFs.
In Section \ref{S5}, we present the energy extremal properties of the QPSWFs in the time and frequency domains.
In particular, if a finite energy signal is given, the possible proportions of its energy in a finite time-domain and
a finite frequency-domain are found, as well as the signals which do the best job of simultaneous time and frequency
concentration. We find that the QPSWFs can reach the extreme case of the energy relationships.
In Section \ref{S6}, the QPSWFs are used in the  bandlimited extrapolation problem and achieved good results.
Section \ref{S7}  concludes the paper.

%%%%%%%%%%%%%%%%%%%%%%%%%%%section2%%%%%%%%%%%%%%%%%%%%%%%%%%%%%%%%%%%%%%%%%%%%%%
\section{The quaternionic Fourier transform (QFT)}
\label{S2}
\subsection{Quaternion Algebra}
Quaternion algebra was discovered by W.R. Hamilton in 1843.
It is a kind of hypercomplex numbers related to the rotations in three-dimensional space,
which likes complex numbers used to represent rotations in the two-dimensional plane.
Quaternion algebra has important applications in a variety of field in physics, biomedical, geometry, image processing and so on.
In the present section, to distinguish  quaternions from real numbers, we represent real number, real functions using normal letters and
quaternions and quaternionic functions using boldface letters, respectively.
Now, we begin by reviewing some basic definitions and properties of quaternion algebra.
The new numbers
$$
\bm{q} := q_0 + \mathbf{i} q_1 + \mathbf{j} q_2 + \mathbf{k} q_3 \quad (q_0,q_1,q_2,q_3\in \R),
$$
are called \emph{quaternions} (or more informally, Hamilton numbers) by W.R. Hamilton.
The set of all real quaternions is often denoted by $\H$, in honour of its discoverer.
The imaginary units $ \mathbf{i}$,  $\mathbf{j}$,  $\mathbf{k}$
obey the following laws of multiplication
\begin{eqnarray*}
\textbf{i}^2 = \textbf{j}^2 = \textbf{k}^2 = \textbf{i} \textbf{j} \textbf{k} = -1,
\end{eqnarray*}
and the usual component--wise defined addition. In particular,
the elements $\mathbf{i}$, $\mathbf{j}$, $\mathbf{k}$ are pairwise anticommute.

The neutral element of addition, known as {\it additive identity quaternion},
is defined by $\bm{0}:= 0+\mathbf{i}0+\mathbf{j}0+\mathbf{k}0$.
A quaternion can be also represented as $\bm{q}:=\mathbf{Sc}(\bm{q})+\underline{q}$,
where $\mathbf{Sc}(\bm{q})=q_0$  denotes the {\it scalar part} and
$\underline{q}:=\mathbf{i}q_1+\mathbf{j}q_2+\mathbf{k}q_3$ is the {\it vector part} of $\bm{q}$.
Like in the complex case, the {\it conjugate} of $\bm{q}$ is defined by $\overline{\bm{q}}:=\mathbf{Sc}(\bm{q})-\underline{q}$.
The {\it modulus} of $\bm{q}$ is defined as
\begin{eqnarray} \label{Modulus}
|\bm{q}| := \sqrt{\bm{q}\overline{\bm{q}}} = \sqrt{\overline{\bm{q}}\bm{q}} =\left(q_0^2+q_1^2+q_2^2+q_3^2\right)^{\frac{1}{2}}
\end{eqnarray}
and it coincides with its corresponding Euclidean norm as a vector in $\R^4$.

We consider a kind of hypercomplex signals $\H$-valued %$\bm{f}: \H^2 \to \H$ is a $\H$-valued signals with $\H$-valued variables $\bm{x}$ and $\bm{y}$.
%Here, $\bm{x}:=x_0+\mathbf{i}x_1+\mathbf{j}x_2+\mathbf{k}x_3$ and $\bm{y}:=y_0+\mathbf{i}y_1+\mathbf{j}y_2+\mathbf{k}y_3$.
%Let $X:=(x_0,x_1,x_2,x_3)$ and $Y:=(y_0,y_1,y_2,y_3)$ be two vectors.
%The $\bm{f}(\bm{x},\bm{y})$ can be expressed by
%$$\bm{f}(\bm{x},\bm{y})= f_0(X,Y) + \mathbf{i} f_1(X,Y) + \mathbf{j} f_2(X,Y) + \mathbf{k}f_3(X,Y),$$
%where $ f_i(X,Y): \R^8 \to \R$ $(i = 0,1,2,3)$ are real-valued functions.
%For simplicity, we consider a kind of hypercomplex signals $\H$-valued signals with $X:=(x,0,0,0)$ and $Y:=(y,0,0,0)$,
%i.e.
signals of the form $\bm{f}: \R^2 \to \H$ such that
\begin{eqnarray} \label{QuaternionForm}
 \bm{f}(x,y) &=& f_0(x,y) + \mathbf{i} f_1(x,y) + \mathbf{j} f_2(x,y) + \mathbf{k}f_3(x,y), \\
 f_i(x,y)&:& \R^2 \to \R \qquad (i = 0,1,2,3) \nonumber.
\end{eqnarray}
Properties (like integrability, continuity or differentiability) that are ascribed to $\bm{f}$ have to be fulfilled by all components $f_i$.
Since $\mathbf{k}=\mathbf{i}\mathbf{j}$, we can also rewrite Eq. (\ref{QuaternionForm}) as following
\begin{eqnarray} \label{QuaternionFormij}
\bm{f}(x,y) = f_0(x,y) + \mathbf{i} f_1(x,y) +f_2(x,y) \mathbf{j}  + \mathbf{i}f_3(x,y)\mathbf{j},
\end{eqnarray}
which keeps all  the imaginary unit $\mathbf{i}$ to the left and $\mathbf{j}$ to the right of each term \cite{H2007}.
Let $\mathcal{L}^p(\R^2;\H)$ $(p=1,2)$ denote the linear spaces of
all $\H$-valued functions in $\R^2$ under left multiplication by
quaternions such that each component is in the usual $\mathcal{L}^p(\R^2)$:
\begin{eqnarray}
\mathcal{L}^p(\R^2;\H) :=
\left\{\bm{f}\Big|\bm{f}:\R^2 \to \H , \; \int_{\R^2}  |\bm{f}(x,y)|^p dxdy < \infty \right\}.
\end{eqnarray}
We further introduce the left quaternionic inner product for two functions $\bm{f},\bm{g} \in \mathcal{L}^2(\R^2;\H)$ as follows
\begin{eqnarray} \label{InnerProduct}
<\bm{f},\bm{g}> \,:= \int_{\R^2} \bm{f}(x,y)\overline{\bm{g}(x,y)}dxdy.
\end{eqnarray}
Here, we just consider the left inner product throughout the paper.
Because the right inner product will lead the similar results in the following.
The reader should note that the norm induced by this inner product as follows
\begin{eqnarray} \label{Norm}
\|\bm{f}\|_{\mathcal{L}^2(\R^2; \H)} \,:=
\Big(< \bm{f},\bm{f}>\Big)^{1/2}=
\left(\int_{\R^2} |\bm{f}(x,y)|^2 dx dy \right)^{1/2}
\end{eqnarray}
coincides with the usual $\mathcal{L}^2$-norm of $\bm{f}$, considered as a vector-valued function.
In particular,
we note the symbol $\|\bm{f}\|_{\mathcal{L}^2}=\|\bm{f}\|_{\mathcal{L}^2(\R^2; \H)}$
for simplicity. The square norm $\|\bm{f}\|^2_{\mathcal{L}^2} :=E$
 is also often called the {\it total energy} of the $\H$-valued signal $\bm{f}$ and is normalized so that $E=1$ throughout.

The space $\mathcal{L}^2(\R^2;\H)$ furnished with the inner product Eq. (\ref{InnerProduct})
is an $\H$-valued Hilbert space and the norm in Eq. (\ref{Norm}) turns
$\mathcal{L}^2(\R^2, \H)$ into a Banach space \cite{BDS1982}.
We will make use of a special notation to express the scalar part of Eq. (\ref{InnerProduct}):
\begin{eqnarray} \label{Real_Inner_Product}
<\bm{f},\bm{g}>_0 \,:= \mathbf{Sc}\Big(<\bm{f},\bm{g}>\Big).
\end{eqnarray}
The real-valued inner product Eq. (\ref{Real_Inner_Product}) appeared
for example in \cite{D1975} in the context of complex vector spaces,
in \cite{GS1989} for spaces of $\H$-valued functions and it was also considered
in \cite{BDS1982} for spaces of Clifford-valued functions.

\begin{Def}
Two elements $\bm{f},\bm{g} \in \mathcal{L}^2(\R^2;\H)$ are orthogonal
in the $\mathcal{L}^2$-sense if $<\bm{f},\bm{g}>=\bm{0}$.
\end{Def}

To proceed with, we now define an angle between two $\H$-valued functions.
\begin{Def}
The angle between two non-zero functions $\bm{f}, \bm{g} \in {\mathcal {L}^2(\R^2;\H)}$ is defined by
\begin{eqnarray}
\arg(\bm{f},\bm{g}) :=
\arccos \left(\frac{< \bm{f},\bm{g}>_0}{\|\bm{f}\|_{\mathcal{L}^2} \|\bm{g}\|_{\mathcal{L}^2} }\right).
\end{eqnarray}
\end{Def}
The superimposed argument is well-defined since, obviously, it holds
$$|< \bm{f},\bm{g}>_0 |\leq |<\bm{f},\bm{g}>| \leq
\|\bm{f}\|_{\mathcal{L}^2} \|\bm{g}\|_{\mathcal{L}^2}.$$

The extremal values of  this angle will be discussed in Section \ref{S4} in detail.
%%%%%%%%%%%%%%Section2Section2Section2%%%%%%%%%%%%%%%%%%%%%%%%%%%%%%%%%%%%%%%%%%%%%%%%%%%%%%%%%%%%%%

\subsection{The Quaternionic Fourier Transform (QFT)}
The QFT we have used throughout the paper is as follows \cite{BHHA2008,H2007,HM2008}
\begin{Def}
Let $\bm{f}\in \mathcal {L}^1(\R^2;\H)$. The two-sided QFT of $\bm{f}$ is defined by
\begin{eqnarray}
\mathcal{F}(\bm{f})(u,v):=\frac{1}{2\pi} \int_{\R^2} e^{-\mathbf{i}ux}\bm{f}(x,y)e^{-\mathbf{j}vy}
dxdy,
\end{eqnarray}
where $(x,y)$, $(u,v)$ are points in $\R^2$.
\end{Def}
Here, $(x,y)$ will denote the {\it space} and $(u,v)$ the {\it angular frequency} variables.

Under suitable conditions, the original quaternionic function $\bm{f}$ can be reconstructed from $\mathcal{F}(\bm{f})$ by the inverse transform.
\begin{Def}
The inverse (two-sided) QFT of $\bm{f} \in \mathcal{L}^1 \bigcap \mathcal{L}^2(\R^2;\H)$, if applicable, is defined by
\begin{eqnarray}
\mathcal{F}^{-1}(\bm{f})(x,y) := \frac{1}{2\pi} \int_{\R^2} e^{\mathbf{i}ux}
\bm{f}(u,v)e^{\mathbf{j}vy} dudv.
\end{eqnarray}
\end{Def}

Since $\bm{f}= f_0 + \mathbf{i} f_1 +  f_2 \mathbf{j}+ \mathbf{i}f_3\mathbf{j}$ with
 $f_i: \R^2 \to \R,~(i = 0,1,2,3) $, the $\mathcal {F}(\bm{f})$ have a symmetric representation as follows
\begin{eqnarray}
\mathcal {F}(\bm{f})=
\mathcal {F}(f_0) +
\mathbf{i} \mathcal {F}(f_1)
 + \mathcal {F}(f_2)\mathbf{j}  +
  \mathbf{i}\mathcal {F}(f_3)\mathbf{j},
\end{eqnarray}
where $\mathcal {F}(f_i),~(i = 0,1,2,3) $ are $\H$-valued functions.
However, we can't use the modulus in Eq. (\ref{Modulus}) because the $\mathcal {F}(f_i),~(i = 0,1,2,3) $ are not real-valued functions.
For this reason, a $Q$ modulus of $\mathcal {F}(\bm{f})$ is introduced as follows \cite{CKL2015}
\begin{eqnarray}\label{Qmodulu}
|\mathcal {F}(\bm{f})|_Q^2
:=|\mathcal {F}(f_0)|^2+|\mathcal {F}(f_1)|^2+|\mathcal {F}(f_2)|^2+|\mathcal {F}(f_3)|^2.
\end{eqnarray}

Let $\bm{f}(x,y)$ and $\mathcal {F}(\bm{f})(u,v)\in \mathcal {L}^1(\R^2;\H)$.
The Parseval's theorem for the QFT is given as follows \cite{CKL2015}
\begin{eqnarray}\label{Parseval_Identity}
\|\bm{f}\|_{\mathcal{L}^2}^2 =\|\mathcal {F}(\bm{f}) \|_Q ^2,
\end{eqnarray}
where $\|\mathcal {F}(\bm{f}) \|_Q ^2:=\int_{\R^2}  |\mathcal {F}(\bm{f}) (u,v)|_Q ^2 dudv<\infty.$

From the Parseval's identity Eq. (\ref{Parseval_Identity}), we can find that
the total energy of a $\H$-valued signal in the time-domain is the same as that in the frequency-domain under the $Q$ modulus.

%%%%%%%%%%%%%%%%%%%%%%%%%%%section3%%%%%%%%%%%%%%%%%%%%%%%%%%%%%%%%%%%%%%%%%%%%%%
\section{The Quaternionic Prolate Spheroidal Wave Functions}
\label{S3}
The present section introduces the quaternionic prolate spheroidal wave functions (QPSWFs) and discusses some of their general properties.

First we introduce the notations we need in this part.
Let $\mathbf{T}:=[-T,  T] \times [-T,  T] \subset \R^2$ be the {\it time-domain},
and $\mathbf{W}:=[-W, W]\times [-W, W] \subset \R^2$ the {\it frequency-domain}
so that $\textbf{W}$ is a scaled version of $\textbf{T}$. We write $\textbf{W}=c\textbf{T}$,
where ${\bf{x}} \in c \textbf{T}$ if and only if ${\bf{x}}/c \in \textbf{T}$ with $c$ a positive constant.
For simplicity of presentation, we represent the integral notations $\int_{-T}^{T} \int_{-T}^{T}$ and $\int_{-W}^{W} \int_{-W}^{W}$
 in the abbreviated notation $\int_{\mathbf{T}}$ and $\int_{\textbf{W}}$, respectively.
Let $\mathcal {B}_\mathbf{W}
:=\{\bm{f}\in \mathcal {L}^2(\R^2;\H)~|~\mathcal {F}(\bm{f})(u,v)=0,  (u,v) \in
\R^2\setminus \mathbf{ W}\}$
be the Paley-Wiener space of $\H$-valued functions that are bandlimited to $\textbf{W}$.
The space $\mathcal {B}_\mathbf{W}$ will be discussed in detail in Section \ref{S4}.

\subsection{Definition of QPSWFs}
We are now ready to introduce the quaternionic prolate spheroidal wave functions in the finite quaternionic Fourier transform setting.
\begin{Def} [Finite-QFT form] \label{def1}
Given a real $c>0$, the quaternionic prolate spheroidal wave functions (QPSWFs)
$\bm{\psi}_{n}: \R^2 \rightarrow \H$ $(n=0, 1, \ldots)$, are the solutions of the integral equation
\begin{eqnarray} \label{Q1}
\mu_{n}
\mathbf{i}^n\bm{\psi}_{n}(x,y)\mathbf{j}^n
:=\int_{ \mathbf{ T}}
e^{\mathbf{i}csx}
\bm{\psi}_{n}(s,t)
e^{\mathbf{j}cty}
dsdt,
\end{eqnarray}
where $\mu_{n}$ are the complex parameters corresponding to the eigenfunction $\bm{\psi}_{n}(x,y)$.
Moreover, the functions $\{ \bm{\psi}_{n}(x,y) \}_{n=0}^{\infty}$ are complete in the class of $\textbf{W}$-bandlimited functions $\mathcal{B}_{\textbf{W}}$.
\end{Def}

In the notations used above we have concealed the fact that both the $\bm{\psi}_{n}(x,y)$'s and
the $\mu_{n}$'s depend on the parameter $c$. When it is necessary to make this dependence explicit,
we write $\mu_{n} = \mu_{n}(c)$ and $\bm{\psi}_{n}(x,y) = \bm{\psi}_{n}(x,y;c)$, $n=0,1, \cdots$, where $c>0$.
Naturally, considerable simplification occurs when $\bm{\psi}_{n}(x,y)$ is even or odd with $n$.
Due to the symmetry of the domain $\mathbf{T}$, one can easily shows that
$\bm{\psi}_{n}(-x,y)$, $\bm{\psi}_{n}(x,-y)$ and $\bm{\psi}_{n}(-x,-y)$ are also solutions
to  the integral Eq. (\ref{Q1}).  Consequently, they are solutions of Eq. (\ref{Q1}) as well
\begin{eqnarray*}
\bm{\psi}_{\textbf{ee}}(x,y)&:=&\bm{\psi}_{n}(x,y)+\bm{\psi}_{n}(-x,-y),\\
\bm{\psi}_{\textbf{e}^1}(x,y)&:=&\bm{\psi}_{n}(x,y)+\bm{\psi}_{n}(-x,y),\\
\bm{\psi}_{\textbf{e}^2}(x,y)&:=&\bm{\psi}_{n}(x,y)+\bm{\psi}_{n}(x,-y), \\
\bm{\psi}_{\textbf{o}^1}(x,y)&:=&\bm{\psi}_{n}(x,y)-\bm{\psi}_{n}(-x,y),\\
\bm{\psi}_{\textbf{o}^2}(x,y)&:=&\bm{\psi}_{n}(x,y)-\bm{\psi}_{n}(x,-y),\\
\bm{\psi}_{\textbf{oo}}(x,y)&:=&\bm{\psi}_{n}(x,y)-\bm{\psi}_{n}(-x,-y).
\end{eqnarray*}
The $\{\bm{\psi}_{n}(x,y)\}_{n=0}^{\infty}$ possess a number of special properties that make them most useful for the study of bandlimited functions.
They are also the eigenfunctions of the integral Eq. (\ref{Q2}) (see Theorem \ref{Theorem_2} below).

Before introduce the Theorem \ref{Theorem_2}, a lemma that connects sinc-functions and an integral is listed here without proof,
which can be easily checked.
\begin{Lem} \label{expart}
Let $(x,y)$, $(u,v)$ be points in $\R^2$. There holds
\begin{eqnarray}\label{Eq. expart}
\frac{1}{(2\pi)^2}\int_{\mathbf{W}}e^{\mathbf{i}u(x-s)}e^{\mathbf{j}v(y-t)}dudv
 =\frac{\sin W (x-s)}{\pi (x-s)}\frac{\sin W (y-t)}{\pi (y-t)}.
\end{eqnarray}
\end{Lem}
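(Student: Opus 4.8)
The plan is to use the product structure of the region $\mathbf{W}=[-W,W]\times[-W,W]$ to split the double integral into a product of two elementary one-dimensional integrals. First I would observe that, for fixed $v$, the factor $e^{\mathbf{j}v(y-t)}$ does not depend on $u$, so by linearity of the integral (and since a Riemann sum factors a constant out on the right even in $\H$) it may be pulled to the right of $\int_{-W}^{W}e^{\mathbf{i}u(x-s)}\,du$; integrating the outcome over $v$ and then pulling the now $v$-independent left factor out on the left yields
\begin{eqnarray*}
\frac{1}{(2\pi)^2}\int_{\mathbf{W}}e^{\mathbf{i}u(x-s)}e^{\mathbf{j}v(y-t)}\,dudv
=\frac{1}{(2\pi)^2}\left(\int_{-W}^{W}e^{\mathbf{i}u(x-s)}\,du\right)\left(\int_{-W}^{W}e^{\mathbf{j}v(y-t)}\,dv\right).
\end{eqnarray*}

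Next I would evaluate each single integral by Euler's formula for quaternionic exponentials, $e^{\mathbf{i}\theta}=\cos\theta+\mathbf{i}\sin\theta$ and likewise with $\mathbf{j}$. On the symmetric interval $[-W,W]$ the map $u\mapsto\sin\!\big(u(x-s)\big)$ is odd, so its contribution cancels and one is left with $\int_{-W}^{W}e^{\mathbf{i}u(x-s)}\,du=\int_{-W}^{W}\cos\!\big(u(x-s)\big)\,du=\dfrac{2\sin W(x-s)}{x-s}$, a real scalar; the identical argument gives $\int_{-W}^{W}e^{\mathbf{j}v(y-t)}\,dv=\dfrac{2\sin W(y-t)}{y-t}$.

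Finally, substituting these two values back into the factored expression and cancelling the $4$ against $(2\pi)^2=4\pi^2$ produces $\dfrac{\sin W(x-s)}{\pi(x-s)}\dfrac{\sin W(y-t)}{\pi(y-t)}$, which is the asserted identity, with the usual convention that the quotients are read as their limiting values $W/\pi$ when $x=s$ or $y=t$. There is no genuinely hard step here; the only point demanding a moment's care is the bookkeeping of left versus right multiplication when separating the two variables, and this turns out to be harmless precisely because each separated integral collapses to a real number, so the non-commutativity of $\H$ never actually intervenes.
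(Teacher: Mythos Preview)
Your argument is correct and complete: the separation into a product of two one-variable integrals, the use of $e^{\mathbf{i}\theta}=\cos\theta+\mathbf{i}\sin\theta$ (and likewise for $\mathbf{j}$), the odd-symmetry cancellation of the sine part, and the final arithmetic all go through, and your remark that each factor is real so that noncommutativity plays no role is exactly the right justification. The paper itself states this lemma without proof, noting only that it ``can be easily checked,'' so there is nothing to compare against; your write-up supplies precisely the routine verification the authors omit.
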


\begin{Th} [Low-pass filtering form] \label{Theorem_2}
The solutions of Eq. (\ref{Q1}) are also the solutions of the integral equation
\begin{eqnarray}\label{Q2}
\lambda_{n}\bm{\psi}_{n}(x,y):=\int_{ \mathbf{ T}}\bm{\psi}_{n}(s,t)
\frac{\sin W (x-s)}{\pi (x-s)}\frac{\sin W (y-t)}{\pi (y-t)}dsdt,
\end{eqnarray}
where $\lambda_{n}$ are eigenvalues corresponding to the eigenfunctions
$\bm{\psi}_{n}(x,y)$.
\end{Th}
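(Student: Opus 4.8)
The plan is to convert the finite-QFT relation Eq.~(\ref{Q1}) into the low-pass form Eq.~(\ref{Q2}) by inserting the $\bm{\psi}_n$ back into the inverse QFT and exploiting the fact that $\bm{\psi}_n$ is $\mathbf{W}$-bandlimited. First I would note that, by Definition \ref{def1}, $\bm{\psi}_n \in \mathcal{B}_{\mathbf{W}}$, so $\mathcal{F}(\bm{\psi}_n)(u,v)$ is supported in $\mathbf{W}$ and the inversion formula gives
\begin{eqnarray*}
\bm{\psi}_n(x,y) = \frac{1}{2\pi}\int_{\mathbf{W}} e^{\mathbf{i}ux}\,\mathcal{F}(\bm{\psi}_n)(u,v)\,e^{\mathbf{j}vy}\,dudv.
\end{eqnarray*}
Next I would compute $\mathcal{F}(\bm{\psi}_n)(u,v)$ on $\mathbf{W}$ by a change of variables that turns the QFT integral into the finite-QFT integral of Eq.~(\ref{Q1}): writing $u = cs'$, $v = ct'$ appropriately (so that the exponential kernel $e^{-\mathbf{i}ux}\cdots e^{-\mathbf{j}vy}$ matches $e^{\mathbf{i}cs x}\cdots e^{\mathbf{j}ct y}$ up to the sign and the scaling $W = cT$), one identifies $\mathcal{F}(\bm{\psi}_n)$ restricted to $\mathbf{W}$ with a scaled copy of $\mathbf{i}^n\bm{\psi}_n\mathbf{j}^n$ times $\mu_n$. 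The key algebraic point is that the left/right placement of the imaginary units $\mathbf{i}$ and $\mathbf{j}$ in the two-sided kernel is exactly compatible with the $\mathbf{i}^n(\cdot)\mathbf{j}^n$ structure appearing on the left-hand side of Eq.~(\ref{Q1}), so nothing obstructs the substitution despite non-commutativity.

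Substituting this expression for $\mathcal{F}(\bm{\psi}_n)$ back into the inversion integral and unfolding the scaling, the double integral over $\mathbf{W}$ becomes an iterated integral in which the inner integral over the frequency variables is precisely
\begin{eqnarray*}
\frac{1}{(2\pi)^2}\int_{\mathbf{W}} e^{\mathbf{i}u(x-s)} e^{\mathbf{j}v(y-t)}\,dudv = \frac{\sin W(x-s)}{\pi(x-s)}\,\frac{\sin W(y-t)}{\pi(y-t)},
\end{eqnarray*}
by Lemma \ref{expart}. Collecting the surviving integral over $(s,t)\in\mathbf{T}$ then yields Eq.~(\ref{Q2}) with $\lambda_n$ a constant multiple of $|\mu_n|^2$ (the modulus squared arising because one applies the finite-QFT identity once in the forward direction and once in the inverse direction, and the unimodular phases $\mathbf{i}^n,\mathbf{j}^n$ on the two sides cancel against $\mathbf{i}^{-n},\mathbf{j}^{-n}$ from conjugation). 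I would record the precise relation $\lambda_n = \frac{c}{2\pi}|\mu_n|^2$ (or whatever constant the normalization dictates) and observe that $\lambda_n$ is real and nonnegative, as it must be for a self-adjoint positive kernel.

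The main obstacle will be bookkeeping the non-commutative factors $\mathbf{i}^n$ and $\mathbf{j}^n$ correctly through the two applications of Eq.~(\ref{Q1}): one has to be careful that these units sit on the correct side of $\bm{\psi}_n$ at each stage so that they genuinely cancel rather than accumulate, and that the scalar parameter $\mu_n$ (complex, hence not central in $\H$ unless one is careful about which imaginary unit it involves) can legitimately be pulled through the integral. A secondary technical point is justifying the interchange of the order of integration when inserting $\mathcal{F}(\bm{\psi}_n)$ into the inversion formula; this is routine given $\bm{\psi}_n \in \mathcal{L}^1\cap\mathcal{L}^2$ and the compactness of $\mathbf{T}$ and $\mathbf{W}$, so I would invoke Fubini's theorem without dwelling on it. Once the unit-tracking is done cleanly, the identity Eq.~(\ref{Q2}) drops out directly.
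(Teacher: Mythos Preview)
Your proposal is correct and follows essentially the same route as the paper: expand the sinc product via Lemma~\ref{expart}, then apply the finite-QFT relation Eq.~(\ref{Q1}) twice so that the outer $\mathbf{i}^n(\cdot)\mathbf{j}^n$ factors cancel against the inner $(-\mathbf{i})^n(\cdot)(-\mathbf{j})^n$ factors. The only discrepancies are cosmetic: the paper works directly from the right-hand side of Eq.~(\ref{Q2}) rather than through the inversion formula, the cancellation is $(-\mathbf{i})^n\mathbf{i}^n=1$ and $\mathbf{j}^n(-\mathbf{j})^n=1$ (not conjugation), and the constant obtained is $\lambda_n = \dfrac{c^2\mu_n^2}{(2\pi)^2}$ rather than $\dfrac{c}{2\pi}|\mu_n|^2$.
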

\begin{proof}
Using Eq. (\ref{Eq. expart}) and having in mind that $\frac{\sin W (x_i-y_i)}{\pi (x_i-y_i)}$ $(i=1,2)$ is real value,
straightforward computations on the right side show that
\begin{eqnarray*}
&&\int_{ \mathbf{ T}}\frac{\sin W (x-s)}{\pi (x-s)}
\bm{\psi}_{n}(s,t)\frac{\sin W (y-t)}{\pi (y-t)}dsdt\\
=&&\frac{1}{(2\pi)^2}
\int_{ \mathbf{ T}}\int_{\mathbf{W}}
e^{\mathbf{i}u(x-s)}\bm{\psi}_{n}(s,t)e^{\mathbf{j}v(y-t)}dudvdsdt\\
=&&\frac{1}{(2\pi)^2}
\int_{\mathbf{W}}e^{\mathbf{i}ux}
\left(\int_{ \mathbf{ T}}e^{-\mathbf{i}us}
\bm{\psi}_{n}(s,t)
e^{-\mathbf{j}vt}
dsdt\right)e^{\mathbf{j}vy}dudv.
\end{eqnarray*}
Then we use the definition of QPSWFs to the integral above,
\begin{eqnarray*}
&&\frac{\mu_{n}}{(2\pi)^2}
\int_{\mathbf{W}}e^{\mathbf{i}ux}
\left((-\mathbf{i})^n\psi_{n}(\frac{u}{c},\frac{v}{c})(-\mathbf{j})^n\right)e^{\mathbf{j}vy}dudv\\
=&&\frac{c^2\mu_{n}}{(2\pi)^2}(-\mathbf{i})^n
\left(\int_{\mathbf{T}}e^{\mathbf{i}cu_1x}
\bm{\psi}_{n}(u_1,u_2)
e^{\mathbf{j}cu_2y}
du_1du_2\right)(-\mathbf{j})^n\\
=&&\frac{c^2\mu^2_n}{(2\pi)^2}(-\mathbf{i})^n\mathbf{i}^n
\bm{\psi}_{n}(x,y)\mathbf{j}^n(-\mathbf{j})^n
=\lambda_{n}\bm{\psi}_{n}(x,y).
\end{eqnarray*}
Derived from the above process, we find that  the relationships between $\mu_n$ and  $\lambda_n$ are
$\lambda_{n} := \frac{c^2\mu^2_{n}}{(2\pi)^2}$, $n=0,1, \dots$.
\end{proof}
Theorem \ref{Theorem_2} gives a straight forward derivation of QPSWFs from finite-QFT form to a low-pass filtering system.
The low-pass filtering form of QPSWFs is important to study the properties of them.
Since the noncommunicative of $\H$-valued signals with the QFT kernel,
the low-pass filtering form, which connects the $\H$-valued signals with two real-valued kernel,
provides an easy way to study the QPSWFs.
Most of the properties of QPSWFs are deduced from the low-pass filtering form.

\subsection{Properties of QPSWFs}
To state the properties of QPSWFs,  we shall need some new notations and basic facts about integral equations at first.
Let the real-valued kernel function
\begin{eqnarray} \label{k2}
k(x,y,s,t) := \frac{\sin W (x-s)}{\pi (x-s)}\frac{\sin W (y-t)}{\pi (y-t)}.
\end{eqnarray}
\begin{Def} [Quaternion Hermitian operator]
For any $\bm{f}\in \mathcal{L}^2(\R^2,\H)$, the quaternion Hermitian operator is defined by
\begin{eqnarray} \label{Quaternion_Hermitian_Operator}
(K\bm{f})(x,y) := \int_{ \mathbf{ T}}k(x,y,s,t) \bm{f}(s,t)dsdt.
\end{eqnarray}
\end{Def}
$K$ is a  Hermitian operator, because for any $\bm{f}, \bm{g}\in \mathcal{L}^2(\R^2,\H)$
$$
\int_{\R^2}\overline{\bm{f}(x,y)}(K\bm{g})(x,y)dxdy=\int_{\R^2}\overline{(K\bm{f})(x,y)}\bm{g}(x,y)dxdy.
$$
Here, since this operator acting on the $\H$-valued functions, we call it the quaternion Hermitian operator.
In particular, when the function is complex-valued , the role of this Hermitian operator is the original Hermitian operator.

For any $\bm{\psi} \in \mathcal{L}^2(\R^2,\H)$ the Hermitian operator Eq.
(\ref{Quaternion_Hermitian_Operator}) is indeed a linear combination of Hermitian operators of four real-valued signals
\begin{eqnarray}
\begin{array}{ll}
K\bm{\psi} = K\psi_0 + \mathbf{i}K\psi_1 + K\psi_2\mathbf{j} + \mathbf{i}K\psi_3\mathbf{j}=\lambda \bm{\psi}, \\
~ \psi_i: \R^2 \to \R \;\; (i = 0,1,2,3).
\end{array}
\end{eqnarray}
We can now use the properties of Hermitian kernels for real-valued signals to
study the properties of the Hermitian kernel function Eq. (\ref{k2}) and of its eigenvalues,
that is, values $\lambda$ of for which the homogeneous Fredholm integral equation
of the form $\lambda\varphi=K\varphi$ $(\varphi\in \R$) has a non-trivial solution.

From the general theory of integral equations,
we conclude the following properties for the kernel function \cite{K1992,M2012}:
\begin{enumerate}
\item[(i)] If $k(x,y,s,t) $ is a non-vanishing, continuous, Hermitian kernel,
then $k(x,y,s,t) $ has at least one eigenvalue;
\item[(ii)] If $k(x,y,s,t) $ is a complex-valued, non-vanishing, continuous,
Hermitian kernel, then the eigenvalues  associated to $k(x,y,s,t) $ are real;
\item[(iii)] Let $k(x,y,s,t) $ be a non-zero, continuous, Hermitian kernel.
If $\lambda_{m}$ and $\lambda_{n}$ are any two different eigenvalues,
then their corresponding eigenfunctions $\varphi_{m}$ and $\varphi_{n}$ are orthogonal;
\item[(iv)] \label{Lemorthonormal}
Let $k(x,y,s,t) $ be a non-zero, continuous, Hermitian kernel.
The set of all of its eigenfunctions $\{\varphi_{n}\}_{n=0}^{\infty}$ forms a mutually orthogonal system in $\mathbf{T}$:
\begin{eqnarray}\label{Hermitianorthonormal}
\int_{\mathbf{ T}}\varphi_{n}(x,y)\varphi_{m}(x,y) dx dy = \lambda_{n}\delta_{mn}.
\end{eqnarray}
Here, $\lambda\varphi=K\varphi$, for any $\varphi_{n}, \varphi_{m}\in \R$;
\item[(v)] Let $k(x,y,s,t) $ be a non-zero, continuous kernel defined in $\mathbf{T}$.
Let $\Lambda_k$ denote the set of eigenvalues of the kernel $k(x,y,s,t) $.
Then $\Lambda_k$ is at most countable, and it cannot have a finite limit point.
\end{enumerate}

\begin{Pro} \label{Main_Proposition}
Given a real $c>0$, we can find a countably infinite set of $\H$-valued signals $\{\bm{\psi}_{n}\}_{n=0}^{\infty}$
and a set of numbers $\{ \lambda_{n} \}_{n=0}^{\infty}$ with the following properties:
\begin{enumerate}
\item The eigenvalues $\lambda_n$'s are real and monotonically decreasing in $(0,1)$,
\begin{eqnarray}\label{eigenvaluerelation}
\lambda_{0} \geq\lambda_{1} \geq\lambda_{2} \geq \cdots,
\end{eqnarray}
and such that $\lim_{n\rightarrow \infty} \lambda_{n}=0$;
\item The $\{ \bm{\psi}_{n} \}_{n=0}^{\infty}$ are orthogonal in $\mathbf{T}$,
\begin{eqnarray}
\int_{ \mathbf{T}} \bm{\psi}_{n}(x,y) \overline{\bm{\psi}_{m}(x,y)} dxdy = 4\lambda_{n}\delta_{mn};
\end{eqnarray}
\item The $\{ \bm{\psi}_{n} \}_{n=0}^{\infty}$ are orthonormal in $\R^2$,
\begin{eqnarray}
\int_{\R^2} \bm{\psi}_{n}(x,y) \overline{\bm{\psi}_{m}(x,y)} dxdy= \delta_{mn}.
\end{eqnarray}
\end{enumerate}
\end{Pro}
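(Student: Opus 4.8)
The plan is to deduce everything from the low-pass filtering form of Theorem \ref{Theorem_2}: by that theorem each $\bm{\psi}_n$ of Definition \ref{def1} is an eigenfunction of the operator $K$ of Eq. (\ref{Quaternion_Hermitian_Operator}), whose kernel $k(x,y,s,t)$ of Eq. (\ref{k2}) is real-valued, symmetric, continuous, non-vanishing and square-integrable on $\mathbf{T}\times\mathbf{T}$. First I would apply the classical Fredholm theory to $k$ --- using facts (i)--(v) together with the spectral theorem for compact self-adjoint operators --- to obtain a complete orthogonal system $\{\varphi_n\}_{n=0}^{\infty}$ of real-valued eigenfunctions in $\mathcal{L}^2(\mathbf{T})$ with real eigenvalues $\lambda_n$ normalised as in Eq. (\ref{Hermitianorthonormal}). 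Since $K$ splits as $K\bm{\psi}=K\psi_0+\mathbf{i}K\psi_1+K\psi_2\mathbf{j}+\mathbf{i}K\psi_3\mathbf{j}$, the $\H$-valued eigenproblem $\lambda\bm{\psi}=K\bm{\psi}$ is controlled entirely by the scalar one and contributes no new eigenvalues; the $\bm{\psi}_n$ are then $\H$-linear combinations of the $\varphi_n$, and completeness of $\{\bm{\psi}_n\}$ in $\mathcal{B}_{\mathbf{W}}$ is already asserted in Definition \ref{def1}.

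For item 1, realness of the $\lambda_n$ is fact (ii), and fact (v) gives that they are at most countable with no finite accumulation point. To locate them in $(0,1)$ I would use that, by Lemma \ref{expart}, $k$ is the two-dimensional sinc reproducing kernel of the Paley--Wiener space $\mathcal{B}_{\mathbf{W}}$: any eigenfunction with $\lambda_n\neq0$ satisfies $\bm{\psi}_n=\lambda_n^{-1}K\bm{\psi}_n$ on all of $\R^2$, and the right-hand side is visibly bandlimited to $\mathbf{W}$, so $\bm{\psi}_n\in\mathcal{B}_{\mathbf{W}}$. Pairing $\lambda_n\bm{\psi}_n=K\bm{\psi}_n$ with $\bm{\psi}_n$ over $\R^2$, exchanging the order of integration and invoking the reproducing identity $\int_{\R^2}k(x,y,s,t)\bm{\psi}_n(x,y)\,dxdy=\bm{\psi}_n(s,t)$ yields
\begin{eqnarray*}
\lambda_n\int_{\R^2}|\bm{\psi}_n(x,y)|^2\,dxdy=\int_{\mathbf{T}}|\bm{\psi}_n(s,t)|^2\,dsdt,
\end{eqnarray*}
so $\lambda_n$ is the fraction of the energy of $\bm{\psi}_n$ that lies in $\mathbf{T}$, whence $0\le\lambda_n\le1$; positivity of the nonzero eigenvalues follows because rewriting $k$ by Lemma \ref{expart} exhibits the relevant pairing as an integral over $\mathbf{W}$ of a $Q$-modulus square, and the bound is strict ($\lambda_n<1$) since equality would force $\bm{\psi}_n$ to be a nonzero bandlimited signal supported in $\mathbf{T}$, impossible because elements of $\mathcal{B}_{\mathbf{W}}$ extend to entire functions of exponential type. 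Enumerating the eigenvalues in non-increasing order gives Eq. (\ref{eigenvaluerelation}); that there are infinitely many and that $\lambda_n\to0$ follows from $K$ being Hilbert--Schmidt and of infinite rank (the one-dimensional sinc kernel is non-degenerate), together again with fact (v).

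For items 2 and 3, I would transport the scalar orthogonality $\int_{\mathbf{T}}\varphi_n\varphi_m\,dxdy=\lambda_n\delta_{mn}$ of fact (iv) through the component decomposition of $K$, keeping careful track of the asymmetric placement of $\mathbf{i}$ (on the left) and $\mathbf{j}$ (on the right) forced by Eq. (\ref{Q1}); once the normalisation of the $\bm{\psi}_n$ is fixed this produces $\int_{\mathbf{T}}\bm{\psi}_n\overline{\bm{\psi}_m}\,dxdy=4\lambda_n\delta_{mn}$, the constant reflecting the four real components. Item 3 then follows by the standard device: extend $\bm{\psi}_n$ to $\R^2$ via $\lambda_n\bm{\psi}_n=K\bm{\psi}_n$, pair $\bm{\psi}_n$ and $\bm{\psi}_m$ over $\R^2$, use the reproducing property of $k$ (this time twice) to collapse the double integral over $\R^2\times\R^2$ to a single integral over $\mathbf{T}$, substitute item 2, and divide by $\lambda_n\lambda_m$ (absorbing the normalising constant) to obtain $\int_{\R^2}\bm{\psi}_n\overline{\bm{\psi}_m}\,dxdy=\delta_{mn}$.

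I expect the main obstacles to be: (a) the strict inequality $\lambda_n<1$ and the non-degeneracy (infinite rank) needed for $\lambda_n\to0$, both resting on the Paley--Wiener principle that a nonzero bandlimited quaternionic signal cannot vanish on a set of positive measure, let alone have compact support; and (b) the non-commutative bookkeeping in items 2--3, where one must verify that the passage from the scalar relation (iv) to the quaternionic inner product respects the left role of $\mathbf{i}$ and the right role of $\mathbf{j}$ fixed in Definition \ref{def1}, and it is precisely this step that pins down the normalising constant (the factor $4$).
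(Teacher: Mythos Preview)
Your proposal is correct and follows essentially the same route as the paper: item 1 is obtained from the classical spectral theory of the real symmetric kernel $k$ (the paper simply cites this without proof, whereas you spell out the energy-fraction argument for $0<\lambda_n<1$ and the compactness argument for $\lambda_n\to0$); item 2 is obtained by transporting the scalar relation Eq.~(\ref{Hermitianorthonormal}) through the component decomposition $K\bm{\psi}=K\psi_0+\mathbf{i}K\psi_1+K\psi_2\mathbf{j}+\mathbf{i}K\psi_3\mathbf{j}$; and item 3 is obtained by substituting Eq.~(\ref{Q2}) for both $\bm{\psi}_n$ and $\bm{\psi}_m$, collapsing the $\R^2$ integral via the sinc reproducing identity, and reducing to item 2. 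Your identification of the factor $4$ and of the non-commutative bookkeeping as the only delicate points matches the paper's computation exactly.
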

\begin{proof}
Property 1. follows from the general theory of integral equations for
real-valued signals \cite{K1992,M2012} and is  stated without proof.

We now prove Statement 2. From Eq. (\ref{Hermitianorthonormal}) in Property (iv),
we know that the real-valued eigenfunctions
of the integral equation $\lambda\psi_i=K\psi_i$ $i=0,1,2,3$ form an orthonormal system in $\mathbf{T}$.
For the integral equation $K\bm{\psi}= \lambda\bm{\psi}$, ($\bm{\psi}\in \mathcal{L}^2(\R^2,\H)$),
we conclude that
\begin{eqnarray*}
&&\int_{ \mathbf{ T}}\bm{\psi}_{n}(x,y)\overline{\bm{\psi}_{m}(x,y)}dxdy\\
&= &\int_{ \mathbf{ T}}\left(\psi_{n,0} +\mathbf{i}\psi_{n,1} +
\psi_{n,2}\mathbf{j} +\mathbf{i}\psi_{n,3}\mathbf{j}\right)
\left(\psi_{m,0} -\mathbf{i}\psi_{m,1}-
\psi_{m,2}\mathbf{j} +\mathbf{i}\psi_{m,3}\mathbf{j}\right)dxdy\\
&=& 4\lambda_{n}\delta_{mn}=4\lambda_n\delta_{mn}.
\end{eqnarray*}
Here, we use the fact that for different eigenvalues  $\lambda_{n}$, $\lambda_{m}$, the eigenfunction are orthogonal, i.e.,
$
\int_{ \mathbf{ T}}\psi_{n,i}(x,y)\overline{\psi_{m,j}(x,y)}dxdy= \left\{
\begin{array}{ll}
\lambda_n, & m=n,i=j, i,j=0,1,2,3,\\
[1.5ex]
0,  &m\neq n,i,j=0,1,2,3.
\end{array}\right.
$

For  the  Statement 3, the orthogonality of the QPSWFs on the region $\R^2$ can be deduced as follows
\begin{eqnarray*}
&&\int_{\R^2}\bm{\psi}_{n}(x,y)\overline{\bm{\psi}_{m}(x,y)}dxdy\\
=&&\frac{1}{16\lambda_{n}\lambda_{m}}
\int_{\R^2}\int_{\mathbf{T}}\int_{\mathbf{T}}
\bm{\psi}_{n}(s,t)
\frac{\sin W (x-s)}{\pi (x-s)}\frac{\sin W (y-t)}{\pi (y-t)}\\
&&\frac{\sin W (x-z)}{\pi (x-z)}\frac{\sin W (y-w)}{\pi (y-w)}
\overline{\bm{\psi}_{m}(z,w)}
dsdtdzdwdxdy\\
=&&\frac{1}{16\lambda_{n}\lambda_{m}}
\int_{\mathbf{T}}\int_{\mathbf{T}}
\bm{\psi}_{n}(s,t)
\frac{\sin W (s-z)}{\pi (s-z)}\frac{\sin W (t-w)}{\pi (t-w)}
\overline{\bm{\psi}_{m}(z,w)}dsdtdzdw\\
=&&\frac{1}{4\lambda_{m}}
\int_{\mathbf{T}}\bm{\psi}_{n}(z,w)
\overline{v_{m}(z,w)}dzdw
=\frac{1}{4\lambda_{m}} 4\lambda_{m}\delta_{mn}
= \delta_{mn}.
\end{eqnarray*}
Here, we have substituted $\bm{\psi}_{n}(x,y)$ and $\bm{\psi}_{m}(x,y)$ by Eq. (\ref{Q2}) for the first and the third equality.
We use the equation $\frac{1}{2\pi}\int_R e^{\mathbf{i}(z-s)x}dx=\delta(z-s)$ for the second equality.
For the last equality, we utilize the orthogonality of PSWFs on the finite region.
Thus the orthogonality of the $\bm{\psi}_{n}$ over $\textbf{T}$ implies orthogonality over $\R^2$ and vice-versa.
\end{proof}

\begin{Rem}
Since $\|\bm{\psi}_{n}\|^2_{\mathcal{L}^2} = 1$ and
$\|\bm{\psi}_{n}\|^2_{\mathcal{L}^2({\bf{T}},\H)} = \lambda_{n}$,
a small value of $\lambda_{n}$ implies that $\bm{\psi}_{n}$ has most of
its energy outside the interval $\textbf{T}$ while a value of $\lambda_{n}$
near $1$ implies that $\bm{\psi}_{n}$ is concentrated largely in $\textbf{T}$.
\end{Rem}

\begin{Pro} [All-pass filtering form]
The QPSWFs satisfy the following integral equation
\begin{eqnarray}\label{allpass}
\bm{\psi}_{n}(x,y)=\int_{ \R^2}\bm{\psi}_{n}(s,t)
\frac{\sin W (x-s)}{\pi (x-s)}\frac{\sin W (y-t)}{\pi (y-t)}dsdt,
\end{eqnarray}
which is called the all-pass filtering form of QPSWFs.
\end{Pro}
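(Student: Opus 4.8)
The plan is to read the claimed identity Eq.~(\ref{allpass}) as the reproducing-kernel property of the Paley--Wiener space $\mathcal{B}_{\mathbf{W}}$. Each $\bm{\psi}_n$ is bandlimited to $\mathbf{W}$ (this is built into Definition \ref{def1}; alternatively, the substitution $u=cs$, $v=ct$ turns the right-hand side of Eq.~(\ref{Q1}) into a constant multiple of the inverse QFT of a function supported in $c\mathbf{T}=\mathbf{W}$). Hence passing $\bm{\psi}_n$ through the ideal low-pass filter with cutoff $W$ in each coordinate must return $\bm{\psi}_n$ itself, and I would establish this directly in one of two equivalent ways.

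\emph{Route A (through the QFT).} Write $\bm{\psi}_n=\mathcal{F}^{-1}(\mathcal{F}(\bm{\psi}_n))$ and use $\mathcal{F}(\bm{\psi}_n)(u,v)=\bm{0}$ for $(u,v)\notin\mathbf{W}$ to get
\[
\bm{\psi}_n(x,y)=\frac{1}{2\pi}\int_{\mathbf{W}}e^{\mathbf{i}ux}\,\mathcal{F}(\bm{\psi}_n)(u,v)\,e^{\mathbf{j}vy}\,dudv .
\]
Insert the defining integral for $\mathcal{F}(\bm{\psi}_n)$, apply Fubini to exchange the $(s,t)$- and $(u,v)$-integrations, and carry out the inner integral over $\mathbf{W}$ by Lemma \ref{expart}. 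This collapses the expression to $\int_{\R^2}\bm{\psi}_n(s,t)\,k(x,y,s,t)\,dsdt$ with $k$ as in Eq.~(\ref{k2}), which is precisely Eq.~(\ref{allpass}).

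\emph{Route B (through Theorem \ref{Theorem_2}).} In the right-hand side of Eq.~(\ref{allpass}) replace the inner factor $\bm{\psi}_n(s,t)$ by its low-pass representation Eq.~(\ref{Q2}), interchange the order of integration, and apply the one-variable sinc reproduction identity $\int_{\R}\frac{\sin W(x-s)}{\pi(x-s)}\frac{\sin W(s-z)}{\pi(s-z)}\,ds=\frac{\sin W(x-z)}{\pi(x-z)}$ in each variable (immediate from Lemma \ref{expart} together with $\mathbf{1}_{[-W,W]}^2=\mathbf{1}_{[-W,W]}$, and already used in the proof of Proposition \ref{Main_Proposition}). The double integral over $\mathbf{T}$ then collapses back to the right-hand side of Eq.~(\ref{Q2}), i.e.\ to $\lambda_n\bm{\psi}_n(x,y)$; dividing by $\lambda_n>0$ finishes the proof.

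The only genuine obstacle is the non-commutativity bookkeeping. In Route A one must check that $\frac{1}{(2\pi)^2}\int_{\mathbf{W}}e^{\mathbf{i}u(x-s)}\bm{\psi}_n(s,t)\,e^{\mathbf{j}v(y-t)}\,dudv$ equals the \emph{scalar} kernel $k(x,y,s,t)$ multiplied by $\bm{\psi}_n(s,t)$, even though $\bm{\psi}_n(s,t)$ sits sandwiched between the two exponentials: expanding $e^{\mathbf{i}u(x-s)}=\cos u(x-s)+\mathbf{i}\sin u(x-s)$ and $e^{\mathbf{j}v(y-t)}=\cos v(y-t)+\mathbf{j}\sin v(y-t)$ produces four terms, and the three containing a sine factor integrate to $0$ over the symmetric box $\mathbf{W}=[-W,W]\times[-W,W]$, exactly as in the proof of Theorem \ref{Theorem_2}. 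Everything else — that $\bm{\psi}_n\in\mathcal{L}^1\cap\mathcal{L}^2(\R^2;\H)$ so that Fubini is legitimate, and that $\bm{\psi}_n\in\mathcal{B}_{\mathbf{W}}$ — is already available and requires no new argument.
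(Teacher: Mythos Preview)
Your proposal is correct. Route A is essentially the paper's argument: the paper also reduces everything to the bandlimitedness of $\bm{\psi}_n$ and then to the reproducing property of the sinc kernel, though it packages both steps by invoking the quaternion convolution theorem of \cite{MRR2013} (applied to Eq.~(\ref{Q2}) to read off $\mathcal{F}(\bm{\psi}_n)=\mathcal{F}(\bm{\psi}_n)\chi_{\mathbf{W}}$, then applied in reverse to obtain Eq.~(\ref{allpass})), whereas you unpack the same computation directly with Lemma \ref{expart} and handle the sandwiched exponentials by parity. Route B is a genuinely different, purely spatial alternative that bypasses the QFT altogether; it is arguably more elementary, needing only the one-variable sinc self-reproduction and $\lambda_n>0$, and it avoids the non-commutativity bookkeeping you flag for Route A.
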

\begin{proof}
This proposition can easy be obtain by the special case of the quaternion convolution theorem in \cite{MRR2013}.
Taking the QFT to the both sides of the Eq. (\ref{Q2}), we have
\begin{eqnarray*}
\lambda_n\F(\bm{\psi}_{n})(u,v)=\F(\bm{\psi}_{n}\chi_{\mathbf{ T}})(u,v)\chi_{\mathbf{W}}(u,v),
\end{eqnarray*}
where
$\chi_{\mathbf{ T}}(x,y):= \left\{
\begin{array}{ll}
1, & (x,y)\in \mathbf{ T},\\
[1.5ex]
0,  & {\rm otherwise}.
\end{array}\right.$
and  $\chi_{\mathbf{ W}}(u,v):= \left\{
\begin{array}{ll}
1, & (u,v)\in \mathbf{W},\\
[1.5ex]
0,  & {\rm otherwise}.
\end{array}\right.$
From this equation, we know that
$$
\F(\bm{\psi}_{n})(u,v)=\F(\bm{\psi}_{n})(u,v)\chi_{\mathbf{W}}(u,v).
$$
Then taking the inverse QFT on both sides, we have
\begin{eqnarray}
\bm{\psi}_{n}(x,y)=\int_{ \R^2}\bm{\psi}_{n}(s,t)
\frac{\sin W (x-s)}{\pi (x-s)}\frac{\sin W (y-t)}{\pi (y-t)}dsdt.
\end{eqnarray}
\end{proof}
This all-pass  filtering form of QPSWFs extent the space domain from $\mathbf{ T}$ to $\R^2$.

%%%%%%%%%%%%%%%%%%%%%%%%%%%section4%%%%%%%%%%%%%%%%%%%%%%%%%%%%%%%%%%%%%%%%%%%%%%
\section{The Time-limited and Bandlimited spaces}
\label{S4}
In the present section, we consider two kinds of $\H$-valued functions and their corresponding spaces.
\begin{Def}
We say that a $\H$-valued function $\bm{f}(x,y)$ with finite energy is time-limited
if it vanishes for all $(x,y) \in \R^2 \setminus \mathbf{T} =: \mathbf{ T^c}$.
\end{Def}
\begin{Def}
We say that a $\H$-valued function $\bm{f}(x,y)$ with finite energy is bandlimited
if $\mathcal{F}(\bm{f})(u,v) \equiv  \bm{0}$ for all $(u,v)
\in \R^2 \setminus \mathbf{W} =: \mathbf{W^c}$.
\end{Def}

For any function $\bm{f} \in \mathcal {L}^2(\R^2,\H)$,
we can define two operators such that $\bm{f}$ becomes either a time-limited and bandlimited function.
We define the time-limited operator  $D_\mathbf{T}$ as follows
\begin{eqnarray}
D_\mathbf{T}\bm{f}(x,y):=\bm{f}(x,y)\chi_{\mathbf{ T}}(x,y),
\end{eqnarray}
where
$
\chi_{\mathbf{ T}}(x,y):= \left\{
\begin{array}{ll}
1, & (x,y)\in \mathbf{ T},\\
[1.5ex]
0,  & {\rm otherwise}.
\end{array}\right.
$
The operator $D_\mathbf{T}$ is linear
$
D_\mathbf{T}(\alpha \bm{f}+\beta \bm{g})(x,y)=\alpha D_\mathbf{T}\bm{f}(x,y)+\beta D_\mathbf{T}\bm{g}(x,y),
$
for any $\alpha, \beta \in \R$. Hence there holds
$
< D_\mathbf{T}\bm{f}, D_\mathbf{T^c}\bm{f}  >
= \bm{0},~ \mathrm{for~ all}~ \bm{f} \in \mathcal {L}^2(\R^2,\H).
$

The bandlimited operator  $B_\mathbf{W}$ is
$
B_{\mathbf{W}}\bm{f}(x,y) := \frac{1}{2\pi }\int_{\mathbf{W}}e^{\mathbf{i}ux}
\mathcal{F}(\bm{f})(u,v)e^{\mathbf{j}vy}dudv.
$
Noted that $B_\mathbf{W}$ can be decomposed as follows
\begin{eqnarray*}
B_{\mathbf{W}}\bm{f}(x,y)
&=&\mathcal {F}^{-1}\Bigl(D_\mathbf{W}\mathcal {F}(\bm{f})(u,v)\Bigr)\\
&=&\mathcal {F}^{-1}\Bigl(D_\mathbf{W}\left( \mathcal {F}(f_0)+\mathbf{i}\mathcal {F}(f_1)+
\mathcal {F}(f_2)\mathbf{j}+\mathbf{i}\mathcal {F}(f_3)\mathbf{j}\right)\Bigr)\\
&=& B_{\mathbf{W}}f_0+\mathbf{i}B_{\mathbf{W}}f_1+B_{\mathbf{W}}f_2\mathbf{j}+\mathbf{i}B_{\mathbf{W}}f_3\mathbf{j}.
\end{eqnarray*}
Here,  $f_i: \R^2 \to \R,~(i = 0,1,2,3)$.
Since the QFT is a linear transform, it follows that the operator $B_\mathbf{W}$ is also linear.
Similarly, there holds
\begin{eqnarray}
< B_\mathbf{W}\bm{f}, B_\mathbf{\mathbf{W}^c}\bm{f}>
= \bm{0}, ~\mathrm{for~ all}~
\bm{f}\in \mathcal {L}^2(\R^2,\H).
\end{eqnarray}

\begin{Rem}
For real-valued functions $f_i(x,y), i=0,1,2,3$, the inverse  QFT
$\mathcal {F}^{-1}(D_\mathbf{W}\mathcal {F}(f_i)(u,v))=B_{\mathbf{W}}f_i$.
In most cases,  $\mathcal {F}^{-1}(D_\mathbf{W}\mathcal {F}(f_i)), i=0,1,2,3$
are $\H$-valued functions, but here we conclude that
it is real-valued function. Since
\begin{eqnarray*}
B_{\mathbf{W}}f_i(x,y)
&=&\frac{1}{2\pi }\int_{\mathbf{W}}e^{\mathbf{i}ux}
\mathcal {F}(f_i)(u,v)e^{\mathbf{j}vy}dudv\\
&=&\frac{1}{(2\pi)^2 }\int_{\mathbf{W}}\int_{\R^2}e^{\mathbf{i}ux}
e^{-\mathbf{i}us}f_i(s,t)e^{-\mathbf{j}vt}
e^{\mathbf{j}vy}dsdtdudv\\
&=&\int_{\R^2}\frac{\sin W(x-s)}{\pi(x-s)}f_i(s,t)\frac{\sin W(y-t)}{\pi(y-t)}dsdt,
\end{eqnarray*}
form the third equality, we can find that $B_{\mathbf{W}}f_i(x,y)$ is
just a filter transform for these real-valued functions $f_i(x,y), i=0,1,2,3$.
That means $B_{\mathbf{W}}f_i(x,y)$ are also real-valued functions.
\end{Rem}

Now, we discuss the spaces induced by these two kinds of  operators.
The space induced by $D_\mathbf{T}$ is
\begin{eqnarray}
\mathcal {D}_\mathbf{T} := \{\bm{f} \in \mathcal{L}^2(\R^2,\H)
~|~\bm{f}(x,y) \equiv \bm{0}, (x,y) \in \mathbf{T^c} \}.
\end{eqnarray}
It is easy to see  that $\mathcal{D}_\mathbf{T}$ is a linear subspace of $\mathcal {L}^2(\R^2,\H)$.
\begin{Lem}
$\mathcal {D}_\mathbf{T}$ is complete.
\end{Lem}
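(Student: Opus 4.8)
The plan is to prove that $\mathcal{D}_\mathbf{T}$ is a \emph{closed} linear subspace of the Hilbert space $\mathcal{L}^2(\R^2;\H)$; since a closed subspace of a complete metric space is itself complete, this is enough. Recall that $\mathcal{L}^2(\R^2;\H)$ equipped with the inner product Eq.~(\ref{InnerProduct}) is complete, and that by Eq.~(\ref{Norm}) the induced norm $\|\cdot\|_{\mathcal{L}^2}$ coincides with the ordinary $L^2$-norm of the associated $\R^4$-vector-valued function. Consequently every estimate below is a familiar scalar estimate applied coordinate-wise, and the non-commutativity of $\H$ is irrelevant: completeness is a purely metric property and the metric here is the Euclidean $L^2$ metric.

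First I would take an arbitrary Cauchy sequence $\{\bm{f}_k\}_{k=1}^{\infty}\subset \mathcal{D}_\mathbf{T}$. By completeness of $\mathcal{L}^2(\R^2;\H)$ there exists $\bm{f}\in \mathcal{L}^2(\R^2;\H)$ with $\|\bm{f}_k-\bm{f}\|_{\mathcal{L}^2}\to 0$. The only thing that remains is to check that the limit $\bm{f}$ again vanishes almost everywhere on $\mathbf{T^c}$, i.e.\ that $\bm{f}\in \mathcal{D}_\mathbf{T}$.

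The key step is the contraction property of the time-limiting operator $D_{\mathbf{T^c}}$ (multiplication by $\chi_{\mathbf{T^c}}$): for every $\bm{g}\in \mathcal{L}^2(\R^2;\H)$ one has $\|D_{\mathbf{T^c}}\bm{g}\|_{\mathcal{L}^2}\le \|\bm{g}\|_{\mathcal{L}^2}$, since pointwise $|\chi_{\mathbf{T^c}}(x,y)\bm{g}(x,y)|\le |\bm{g}(x,y)|$. Because $\bm{f}_k\in\mathcal{D}_\mathbf{T}$ means $D_{\mathbf{T^c}}\bm{f}_k=\bm{0}$, applying this with $\bm{g}=\bm{f}-\bm{f}_k$ gives
\[
\|D_{\mathbf{T^c}}\bm{f}\|_{\mathcal{L}^2}
=\|D_{\mathbf{T^c}}(\bm{f}-\bm{f}_k)\|_{\mathcal{L}^2}
\le \|\bm{f}-\bm{f}_k\|_{\mathcal{L}^2}\longrightarrow 0 \quad (k\to\infty).
\]
Hence $\|D_{\mathbf{T^c}}\bm{f}\|_{\mathcal{L}^2}=0$, and since $\|\cdot\|_{\mathcal{L}^2}$ is a genuine norm (Eq.~(\ref{Norm})) this forces $\bm{f}(x,y)=\bm{0}$ for almost every $(x,y)\in\mathbf{T^c}$, i.e.\ $\bm{f}\in\mathcal{D}_\mathbf{T}$. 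Therefore $\mathcal{D}_\mathbf{T}$ is closed in $\mathcal{L}^2(\R^2;\H)$, hence complete. Equivalently, one may simply observe that $\mathcal{D}_\mathbf{T}=\ker D_{\mathbf{T^c}}$ is the kernel of a bounded linear operator, and therefore closed.

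There is essentially no serious obstacle in this argument; the only point that deserves a word of care is the appeal to Eq.~(\ref{Norm}) ensuring that vanishing of $\|D_{\mathbf{T^c}}\bm{f}\|_{\mathcal{L}^2}$ really does imply $\bm{f}=\bm{0}$ a.e.\ on $\mathbf{T^c}$ (rather than merely in some weaker sense), together with the already-recorded fact that $\mathcal{L}^2(\R^2;\H)$ is itself complete. Both are stated in the excerpt, so the proof is short.
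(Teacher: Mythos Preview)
Your proof is correct and follows essentially the same approach as the paper: both take a Cauchy sequence in $\mathcal{D}_\mathbf{T}$, pass to a limit in the ambient complete space $\mathcal{L}^2(\R^2;\H)$, and then use the contraction property of multiplication by a characteristic function to conclude. The only cosmetic difference is that the paper truncates the limit $\bm{f}^*$ to $D_\mathbf{T}\bm{f}^*$ and verifies this is still a limit of the sequence, whereas you show directly that the limit itself already vanishes on $\mathbf{T^c}$; your version is slightly more explicit, and your closing remark that $\mathcal{D}_\mathbf{T}=\ker D_{\mathbf{T^c}}$ is the kernel of a bounded operator is a clean one-line alternative the paper does not mention.
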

\begin{proof}
We must show that if a set of $\H$-valued functions $\{\bm{f}_n(x,y)\}_{n=1}^{\infty}$
is a Cauchy sequence in $\mathcal {D}_\mathbf{T}$, then it converges  in $\mathcal {D}_\mathbf{T}$.
Due to the completeness of $\mathcal {L}^2(\R^2,\H)$,
there is a function $\bm{f}^* \in \mathcal {L}^2(\R^2,\H)$ such that
$\|\bm{f}_n-\bm{f}^*\|_{\mathcal{L}^2} \rightarrow  \bm{0}$  as such $n \rightarrow \infty$.
We  now define  the new function
\begin{eqnarray}
D_{\mathbf{ T}}\bm{f}^*(x,y) = \bm{f}^*(x,y)\chi_{\mathbf{ T}}(x,y),
\end{eqnarray}
where $\chi_{\mathbf{ T}}$ is the characteristic function on $\mathbf{T}$.
It can be verified that $\|\bm{f}_n-D_{\mathbf{T}}\bm{f}^*\|_{\mathcal{L}^2} \rightarrow  \bm{0}$ as
$n$ approaches infinity and, moreover,  $D_{\mathbf{T}}\bm{f}^*\in \mathcal{D}_\mathbf{T}$.
Thus, $\mathcal {D}_\mathbf{T}$ is complete.
\end{proof}

Similarly, the space induced by $B_\mathbf{W}$
form a linear subspace $\mathcal {B}_\mathbf{W}$ of $\mathcal {L}^2(\R^2,\H)$, which is also complete.

Now we consider the extremal values of the angle between  time-limited and  bandlimited  functions.
 The question then arises as to  what are the extremal values of $\arg(\bm{f},\bm{g})$
 between $\bm{g} \in \mathcal{D}_{\mathbf{T}}$ and $\bm{f} \in \mathcal{B}_\mathbf{W}$ under the QFT?
The following lemma and theorem find this extremal value. Since the proof is analogous to the one-dimensional case in \cite{LP1961}.
We restate the similar results for $\H$-valued functions without proof.
\begin{Lem}\label{Lemtheta}
If $\bm{f}\in \mathcal {B}_\mathbf{W}$ is fixed,  then $\arg(\bm{f},\bm{g})$ between $\bm{f}$ and any $\bm{g} \in \mathcal{D}_\mathbf{T}$
satisfies $\inf _{\bm{g}\in \mathcal{D}_{\mathbf{ T}}}\arg( \bm{f},\bm{g})>0$.  This infimum equals
$\arccos \frac{\|D_{\mathbf{T}}\bm{f}\|_{\mathcal{L}^2}}{\|\bm{f}\|_{\mathcal{L}^2}}$
 and is assumed by $\bm{g}=kD_{\mathbf{T}}\bm{f}$ for any  positive constant $k$.
\end{Lem}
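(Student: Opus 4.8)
The plan is to recast the minimization of the angle as a maximization of a Rayleigh-type quotient and then invoke the Cauchy--Schwarz inequality for the \emph{real-valued} inner product $<\cdot,\cdot>_0$. Since $\arg(\bm{f},\bm{g})=\arccos\!\bigl(<\bm{f},\bm{g}>_0/(\|\bm{f}\|_{\mathcal{L}^2}\|\bm{g}\|_{\mathcal{L}^2})\bigr)$ and $\arccos$ is decreasing, infimizing the angle over $\bm{g}\in\mathcal{D}_{\mathbf{T}}$ is the same as maximizing $<\bm{f},\bm{g}>_0/(\|\bm{f}\|_{\mathcal{L}^2}\|\bm{g}\|_{\mathcal{L}^2})$ over nonzero $\bm{g}\in\mathcal{D}_{\mathbf{T}}$. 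The first step is the observation that if $\bm{g}\in\mathcal{D}_{\mathbf{T}}$ then $\bm{g}$ vanishes a.e.\ on $\mathbf{T^c}$, so the integral defining $<\bm{f},\bm{g}>$ is supported on $\mathbf{T}$; hence $<\bm{f},\bm{g}>=<D_{\mathbf{T}}\bm{f},\bm{g}>$ and, taking scalar parts, $<\bm{f},\bm{g}>_0=<D_{\mathbf{T}}\bm{f},\bm{g}>_0$. This step uses only the support of $\bm{g}$, so the noncommutativity of $\H$ causes no difficulty here.

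Next I would verify that $<\cdot,\cdot>_0$ is a genuine inner product on the real vector space $\mathcal{L}^2(\R^2;\H)$: it is $\R$-bilinear, it is symmetric because $<\bm{g},\bm{f}>=\overline{<\bm{f},\bm{g}>}$ and a quaternion has the same scalar part as its conjugate, and it is positive definite since $<\bm{f},\bm{f}>_0=\|\bm{f}\|_{\mathcal{L}^2}^2$. Therefore the classical Cauchy--Schwarz inequality applies and yields
\[
<D_{\mathbf{T}}\bm{f},\bm{g}>_0\;\le\;\|D_{\mathbf{T}}\bm{f}\|_{\mathcal{L}^2}\,\|\bm{g}\|_{\mathcal{L}^2},
\]
with equality exactly when $\bm{g}=k\,D_{\mathbf{T}}\bm{f}$ for a scalar $k\ge 0$; the value $k>0$ is forced once $\bm{g}\neq\bm{0}$, and $k\,D_{\mathbf{T}}\bm{f}$ does lie in $\mathcal{D}_{\mathbf{T}}$ since it vanishes off $\mathbf{T}$. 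Combining with the reduction of the previous paragraph,
\[
\frac{<\bm{f},\bm{g}>_0}{\|\bm{f}\|_{\mathcal{L}^2}\|\bm{g}\|_{\mathcal{L}^2}}\;\le\;\frac{\|D_{\mathbf{T}}\bm{f}\|_{\mathcal{L}^2}}{\|\bm{f}\|_{\mathcal{L}^2}},
\]
so $\arg(\bm{f},\bm{g})\ge\arccos\!\bigl(\|D_{\mathbf{T}}\bm{f}\|_{\mathcal{L}^2}/\|\bm{f}\|_{\mathcal{L}^2}\bigr)$, and a direct substitution shows the bound is attained at $\bm{g}=k\,D_{\mathbf{T}}\bm{f}$ for every $k>0$, independently of $k$.

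It then remains to see that this common value is strictly positive, i.e.\ that $\|D_{\mathbf{T}}\bm{f}\|_{\mathcal{L}^2}<\|\bm{f}\|_{\mathcal{L}^2}$ whenever $\bm{f}\in\mathcal{B}_{\mathbf{W}}$ is nonzero. I would argue by contradiction: if equality held, $\bm{f}$ would vanish a.e.\ on $\mathbf{T^c}$ and hence be simultaneously bandlimited to $\mathbf{W}$ and time-limited to $\mathbf{T}$. But the inverse-QFT representation $\bm{f}(x,y)=\frac{1}{2\pi}\int_{\mathbf{W}}e^{\mathbf{i}ux}\mathcal{F}(\bm{f})(u,v)e^{\mathbf{j}vy}\,dudv$, with $\mathcal{F}(\bm{f})$ supported on the compact set $\mathbf{W}$, exhibits each component $f_i$ as a real-analytic function on $\R^2$; such a function cannot vanish on $\mathbf{T^c}$, which has nonempty interior, unless $\bm{f}\equiv\bm{0}$ — a contradiction. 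Hence $\|D_{\mathbf{T}}\bm{f}\|_{\mathcal{L}^2}<\|\bm{f}\|_{\mathcal{L}^2}$, so $\arccos\!\bigl(\|D_{\mathbf{T}}\bm{f}\|_{\mathcal{L}^2}/\|\bm{f}\|_{\mathcal{L}^2}\bigr)>0$ and therefore $\inf_{\bm{g}\in\mathcal{D}_{\mathbf{T}}}\arg(\bm{f},\bm{g})>0$.

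The main obstacle is not any single estimate but making sure the quaternionic setting genuinely collapses to the scalar Cauchy--Schwarz argument: one must confirm that $<\cdot,\cdot>_0$ really is a real inner product carrying the usual equality characterization, and that the identity $<\bm{f},\bm{g}>=<D_{\mathbf{T}}\bm{f},\bm{g}>$ survives noncommutativity (it does, being a statement about supports). The analyticity/uniqueness step for the strict positivity can be imported verbatim from the scalar theory of bandlimited functions, applied componentwise to the $f_i$, so the non-commutative algebra again plays no essential role there.
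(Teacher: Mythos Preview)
Your argument is correct and is precisely the classical Landau--Pollak projection/Cauchy--Schwarz argument, carried over to the quaternionic setting via the real inner product $<\cdot,\cdot>_0$; the paper in fact omits the proof of this lemma entirely, merely citing the one-dimensional case in \cite{LP1961}, so there is nothing substantive to compare against. The only minor deviation is your justification of strict positivity: you invoke real-analyticity of the components of a bandlimited $\bm{f}$, whereas the paper establishes the impossibility of simultaneous time- and band-limiting later (Lemma~\ref{Lementire}) by quoting a Donoho--Stark-type uncertainty inequality; both routes are valid, and yours is arguably more self-contained.
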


We proceed now to find the least $\arg(\bm{f},\bm{g})$ of arbitrary $\bm{f} \in \mathcal {B}_\mathbf{W}$ and $\bm{g}\in \mathcal {D}_\mathbf{T}$.
We show that the spaces $\mathcal {B}_\mathbf{W}$ and $\mathcal {D}_\mathbf{T}$ form indeed a least angle.
\begin{Th}\label{Thangle}
Let $\bm{f} \in \mathcal {B}_\mathbf{W}$ and $\bm{g} \in \mathcal {D}_\mathbf{T}$.
There exists a least angle between $\mathcal {B}_\mathbf{W}$ and $\mathcal {D}_\mathbf{T}$ that satisfies
\begin{eqnarray}
\inf_ {\bm{f}\in \mathcal {B}_\mathbf{W}, \, {\bm{g}\in \mathcal {D}_\mathbf{T}}} \arg(\bm{f},\bm{g})=\arccos  \sqrt{\lambda_{0}}
\end{eqnarray}
if and only if $\bm{f}=\bm{\psi}_{0}$  and  $\bm{g}=D_{\mathbf{T}}\bm{\psi}_0$, where $\lambda_{0}$ is the largest eigenvalue of (\ref{Q2}),
and $\bm{\psi}_{0}$ is the corresponding eigenfunction.
\end{Th}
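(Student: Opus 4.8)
The plan is to collapse the double infimum into a single eigenvalue extremal problem. First I would freeze $\bm f\in\mathcal{B}_{\mathbf{W}}$ and apply Lemma \ref{Lemtheta}, which already evaluates the inner infimum,
\[
\inf_{\bm g\in\mathcal{D}_{\mathbf{T}}}\arg(\bm f,\bm g)=\arccos\frac{\|D_{\mathbf{T}}\bm f\|_{\mathcal{L}^2}}{\|\bm f\|_{\mathcal{L}^2}},
\]
attained at $\bm g=kD_{\mathbf{T}}\bm f$ for any $k>0$. Since $\arccos$ is strictly decreasing, the full infimum equals $\arccos$ of $\sup_{\bm f\in\mathcal{B}_{\mathbf{W}}}\|D_{\mathbf{T}}\bm f\|_{\mathcal{L}^2}/\|\bm f\|_{\mathcal{L}^2}$, so it remains to show this supremum is $\sqrt{\lambda_0}$ and to identify the maximizers.

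To compute the supremum I would pass to real components, the same device used in the proof of Proposition \ref{Main_Proposition} to bypass noncommutativity. Writing $\bm f=f_0+\mathbf{i}f_1+f_2\mathbf{j}+\mathbf{i}f_3\mathbf{j}$, the Remark in Section \ref{S4} gives that $\bm f\in\mathcal{B}_{\mathbf{W}}$ iff each $f_i$ lies in the real two-dimensional Paley--Wiener space, and that $\|D_{\mathbf{T}}\bm f\|_{\mathcal{L}^2}^2=\sum_{i=0}^3\int_{\mathbf{T}}f_i^2$ while $\|\bm f\|_{\mathcal{L}^2}^2=\sum_{i=0}^3\int_{\R^2}f_i^2$. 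Expanding each $f_i$ in the real PSWF system $\{\varphi_n\}$ underlying Proposition \ref{Main_Proposition}, which is complete in that Paley--Wiener space and satisfies $\int_{\R^2}\varphi_n\varphi_m=\delta_{mn}$, $\int_{\mathbf{T}}\varphi_n\varphi_m=\lambda_n\delta_{mn}$ by property (iv), and putting $A_n:=\sum_{i=0}^3 (b^{(i)}_n)^2\ge 0$ for the coefficients $b^{(i)}_n$ of $f_i$, I get
\[
\frac{\|D_{\mathbf{T}}\bm f\|_{\mathcal{L}^2}^2}{\|\bm f\|_{\mathcal{L}^2}^2}=\frac{\sum_{n\ge 0}\lambda_n A_n}{\sum_{n\ge 0}A_n}\le\lambda_0
\]
by the ordering $\lambda_0\ge\lambda_1\ge\cdots$, with equality exactly when $A_n=0$ for every $n$ with $\lambda_n<\lambda_0$.

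When $\lambda_0$ is simple --- as the statement tacitly assumes in speaking of \emph{the} eigenfunction $\bm\psi_0$ --- this forces each $f_i$ to be a real multiple of $\varphi_0$, hence $\bm f=\bm q\,\varphi_0$ for a constant quaternion $\bm q$; since the quantity in Lemma \ref{Lemtheta} is invariant under $\bm f\mapsto\bm q\bm f$ with $|\bm q|=1$ and under positive rescaling, one normalizes to $\bm f=\bm\psi_0$, whence $\bm g=kD_{\mathbf{T}}\bm\psi_0$. This yields $\sup=\sqrt{\lambda_0}$ and the infimum $\arccos\sqrt{\lambda_0}$, which is the ``only if'' part. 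For the ``if'' part I would verify directly that $\langle\bm\psi_0,D_{\mathbf{T}}\bm\psi_0\rangle=\int_{\mathbf{T}}|\bm\psi_0|^2=\lambda_0$, $\|\bm\psi_0\|_{\mathcal{L}^2}=1$ and $\|D_{\mathbf{T}}\bm\psi_0\|_{\mathcal{L}^2}=\sqrt{\lambda_0}$, so $\arg(\bm\psi_0,D_{\mathbf{T}}\bm\psi_0)=\arccos(\lambda_0/\sqrt{\lambda_0})=\arccos\sqrt{\lambda_0}$ indeed attains the bound.

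The main obstacle is the Parseval-type bookkeeping in the quaternionic setting: a direct expansion of $\bm f$ in $\{\bm\psi_n\}$ with one-sided quaternion coefficients produces off-diagonal terms $\int_{\mathbf{T}}\bm\psi_n\,\bm q\,\overline{\bm\psi_m}$ with a quaternion $\bm q$ wedged between the factors, which cannot be pulled out of the integral --- this is precisely why I would route the argument through the real components and use the sharper orthogonality $\int_{\mathbf{T}}\psi_{n,i}\psi_{m,j}=0$ ($n\neq m$, all $i,j$) obtained inside the proof of Proposition \ref{Main_Proposition}, which annihilates every cross term no matter what constant sits between. A secondary point is that, because the infimum in Lemma \ref{Lemtheta} is insensitive to left multiplication by a unit quaternion, the maximizer is canonical only up to that action, so ``$\bm f=\bm\psi_0$'' should be read modulo this normalization.
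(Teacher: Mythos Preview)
The paper does not actually prove this theorem: immediately after the statement it writes ``The proof is analogous to the one-dimensional case \cite{LP1961}, which is omitted here.'' So there is nothing to compare against beyond the indicated classical template, and your proposal is a correct way to supply the omitted details. The reduction via Lemma~\ref{Lemtheta} to the Rayleigh-quotient problem
\[
\sup_{\bm f\in\mathcal{B}_{\mathbf W}}\frac{\|D_{\mathbf T}\bm f\|_{\mathcal L^2}^2}{\|\bm f\|_{\mathcal L^2}^2}
\]
is exactly right, and in fact the paper itself carries out this very maximization a few lines later as Theorem~\ref{Thbandlimit}, obtaining the bound $\lambda_0$ with equality at $\bm\psi_0$; you could simply cite that theorem for this step. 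Your closing observation that the extremizer is determined only up to left multiplication by a unit quaternion (and positive scaling) is sharper than the paper's ``if and only if $\bm f=\bm\psi_0$'' phrasing, which should indeed be read modulo that normalization.

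One minor point: your worry about wedged quaternions in $\int_{\mathbf T}\bm\psi_n\,\bm q\,\overline{\bm\psi_m}$ is not actually forced. With the paper's left inner product and an expansion $\bm f=\sum_n a_n\bm\psi_n$ with coefficients on the \emph{left}, one gets $\int_{\mathbf T}\bm f\,\overline{\bm f}=\sum_{n,m}a_n\bigl(\int_{\mathbf T}\bm\psi_n\overline{\bm\psi_m}\bigr)\overline{a_m}$, and by Proposition~\ref{Main_Proposition}(2) the bracketed integral is the real scalar $4\lambda_n\delta_{mn}$, which commutes past $a_n$; this is precisely how Theorem~\ref{Thbandlimit} proceeds. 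Your detour through the real components is nonetheless valid and arguably more transparent for the equality discussion.
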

The proof is analogous to the one-dimensional case \cite{LP1961}, which is omitted here.

We have thus found that the two subspaces $\mathcal {B}_\mathbf{W}$
and $\mathcal {D}_\mathbf{T}$ of $\mathcal{L}^2(\R^2,\H)$,
which have no functions except $\bm{0}$ in common,
actually have a minimum angle between them, so that, in fact,
a time-limited function and a bandlimited function cannot even be very close together.

%%%%%%%%%%%%%%%%%%%%%%%%%%%section5%%%%%%%%%%%%%%%%%%%%%%%%%%%%%%%%%%%%%%%%%%%%%%
\section{The Energy Extremal Theorem}
\label{S5}
In the present section, we develop the relationship between the energy concentration
of a $\H$-valued signal in the time and frequency spaces. In particular,
if a finite energy $\H$-valued signal is given, the possible proportions of
its energy in a finite time-domain and a finite frequency-domain are found,
as well as the signals which are the most optimal energy concentration signals.

Throughout this section let $\bm{g}(x,y)$ be a $\H$-valued function,
$\mathbf{T}$ and $\mathbf{W}$ be two specified square regions
such that $\mathbf{W}=c\mathbf{T}$ with $c$ a positive constant.
We form the energy ratios in the time-domain and frequency-domain, respectively, as follows
\begin{eqnarray}
\xi ^2:=\frac{\|D_{\mathbf{T}}\bm{g}\|_{\mathcal{L}^2}^2 }
{\|\bm{g}\|_{\mathcal{L}^2}^2}
\quad {\rm and} \quad
\eta_Q ^2:= \frac{\|D_{\mathbf{W}}\mathcal {F}(\bm{g})\|_Q^2}{\|\mathcal {F}(\bm{g})\|_Q^2}.
\end{eqnarray}
Assume for simplicity that $E := \|\bm{g}\|_{\mathcal{L}^2}^2 =1$ in the following.
As $\bm{g}(x,y)$ ranges over all functions in $\mathcal {L}^2(\R^2,\H)$,
the pair $(\xi,\eta_Q)$ takes various values of the unit square in the $(x,y)$-plane.
In the following, we will discuss the set of possible pairs $(\xi,\eta_Q)$.
In particular, we will determine the functions that reach the extremal values of $(\xi,\eta_Q)$.

\begin{Lem}\label{Lementire}
A non-zero $\H$-valued function $\bm{g}(x,y)\in \mathcal {L}^2(\R^2,\H)$
cannot be both bandlimited and  time-limited.
\end{Lem}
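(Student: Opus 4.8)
The plan is to argue by contradiction using the analyticity that bandlimitedness forces, a standard Paley-Wiener type argument adapted to the quaternionic setting. Suppose $\bm{g}\in\mathcal{L}^2(\R^2,\H)$ is non-zero, both bandlimited to $\mathbf{W}$ and time-limited to $\mathbf{T}$. Writing $\bm{g}=g_0+\mathbf{i}g_1+g_2\mathbf{j}+\mathbf{i}g_3\mathbf{j}$ with real components $g_i$, the remark following the definition of $B_\mathbf{W}$ shows that each $B_\mathbf{W}g_i$ is again real-valued and that $\mathcal{F}(\bm{g})$ splits componentwise; hence bandlimitedness of $\bm{g}$ to $\mathbf{W}$ is equivalent to each scalar component $g_i$ being bandlimited to $\mathbf{W}$, and time-limitedness of $\bm{g}$ to $\mathbf{T}$ is equivalent to each $g_i$ vanishing off $\mathbf{T}$. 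So it suffices to prove the scalar statement: a real-valued $L^2(\R^2)$ function that is bandlimited to $\mathbf{W}=[-W,W]^2$ and supported in $\mathbf{T}$ must be identically zero.

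For the scalar reduction I would invoke the all-pass filtering form: since $g_i$ is bandlimited, $g_i(x,y)=\int_{\R^2}g_i(s,t)\,\frac{\sin W(x-s)}{\pi(x-s)}\frac{\sin W(y-t)}{\pi(y-t)}\,dsdt$, and because $g_i$ is supported in $\mathbf{T}$ the integral is actually over $\mathbf{T}$ only. The right-hand side, as a function of $(x,y)$, extends to an entire function of two complex variables (the sinc kernels $\frac{\sin W(z-s)}{\pi(z-s)}$ are entire in $z$, and one can differentiate under the integral over the bounded set $\mathbf{T}$). Thus $g_i$ agrees a.e.\ with a real-analytic function $G_i$ on $\R^2$. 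But $G_i$ vanishes on the open set $\R^2\setminus\mathbf{T}$, which has non-empty interior, so by the identity theorem for real-analytic functions $G_i\equiv 0$, hence $g_i=0$ a.e., and therefore $\bm{g}=\bm{0}$, contradicting the assumption.

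An alternative, perhaps cleaner, route is the energy/angle argument already set up in Section~\ref{S4}: if $\bm{g}$ were both time-limited and bandlimited then $\bm{g}\in\mathcal{B}_\mathbf{W}\cap\mathcal{D}_\mathbf{T}$, and normalizing $\|\bm{g}\|_{\mathcal{L}^2}=1$ we would have $D_\mathbf{T}\bm{g}=\bm{g}$, so $\arg(\bm{g},\bm{g})=\arccos(\|D_\mathbf{T}\bm{g}\|_{\mathcal{L}^2}/\|\bm{g}\|_{\mathcal{L}^2})=\arccos 1=0$. By Theorem~\ref{Thangle} the infimum of $\arg(\bm{f},\bm{g})$ over $\bm{f}\in\mathcal{B}_\mathbf{W}$, $\bm{g}\in\mathcal{D}_\mathbf{T}$ equals $\arccos\sqrt{\lambda_0}>0$ since $\lambda_0<1$ by Proposition~\ref{Main_Proposition}; this contradicts having angle $0$. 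I would likely present the analyticity proof as the primary argument (it is self-contained and classical) and mention the angle argument as a corollary of the machinery just developed.

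The main obstacle is making the ``extends to an entire function'' step rigorous in a way that respects the quaternionic bookkeeping: one must be careful that the non-commutativity plays no role, which is exactly why the componentwise reduction (justified by the Remark after the definition of $B_\mathbf{W}$) is done first, so that all the hard analysis happens for ordinary real-valued scalar functions. After that reduction, the only technical points are differentiation under the integral sign over the bounded domain $\mathbf{T}$ and the invocation of the identity theorem for real-analytic functions of two variables, both of which are routine. If instead the angle-based proof is preferred, there is essentially no obstacle at all, since Theorem~\ref{Thangle} and Proposition~\ref{Main_Proposition} do all the work.
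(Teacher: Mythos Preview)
Your proposal is correct, and both routes you sketch are sound. The componentwise reduction is the key step and is justified exactly as you say: since $B_{\mathbf{W}}\bm{g}=B_{\mathbf{W}}g_0+\mathbf{i}B_{\mathbf{W}}g_1+B_{\mathbf{W}}g_2\mathbf{j}+\mathbf{i}B_{\mathbf{W}}g_3\mathbf{j}$ with each $B_{\mathbf{W}}g_i$ real-valued, the equality $B_{\mathbf{W}}\bm{g}=\bm{g}$ forces $B_{\mathbf{W}}g_i=g_i$ for every $i$ by uniqueness of the real decomposition; after that the Paley--Wiener/identity-theorem argument is classical and airtight. The angle argument via Theorem~\ref{Thangle} is also valid provided one is comfortable that $\lambda_0<1$ in Proposition~\ref{Main_Proposition} is established independently (otherwise there is a risk of circularity, since $\lambda_0<1$ is essentially equivalent to the lemma).

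However, the paper proves Lemma~\ref{Lementire} by a genuinely different route: it simply invokes the Donoho--Stark type uncertainty principle for the QFT from \cite{DS1989,CKL2015}, namely $|\mathbf{T}|\,|\mathbf{W}|\ge(1-\varepsilon_{\mathbf{T}}-\varepsilon_{\mathbf{W}})^2$, specialized to $\varepsilon_{\mathbf{T}}=\varepsilon_{\mathbf{W}}=0$. Your analyticity proof is more self-contained (no external black box) and yields the stronger qualitative statement that a bandlimited function is real-analytic, which is useful elsewhere; the paper's proof is shorter but imports a nontrivial result and, as written, only produces the constraint $|\mathbf{T}|\,|\mathbf{W}|\ge 1$ rather than an outright contradiction for arbitrary fixed $\mathbf{T},\mathbf{W}$. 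If you want to mirror the paper, cite the quaternionic Donoho--Stark inequality; if you want a cleaner standalone argument, your Paley--Wiener approach is preferable.
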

\begin{proof}
From the uncertainty principle of  \cite{DS1989,CKL2015},
 it follows that $|\mathbf{T}| |\mathbf{W}|\geq(1-\varepsilon_\mathbf{T}-\varepsilon_\mathbf{W})^2,$
where $\bm{g}(x,y)$  has an essential support on $\mathbf{T}$ and $\mathbf{W}$.
In our  case $\varepsilon_\mathbf{T}=0$ and $\varepsilon_\mathbf{W}=0$,
and therefore, $ |\mathbf{T}| |\mathbf{W}|\geq 1$  holds.
That means, if a non-zero $\bm{g}(x,y)$ is bandlimited,
then $\bm{g}(x,y)$ cannot be identically zero in the region $\mathbf{T}$.
\end{proof}

\begin{Rem}
If the $\H$-valued function is bandlimited, i.e. $\eta_Q =1$, then $\xi \neq 0$ by Lemma \ref{Lementire}.
Analogously, if the function is time-limited, i.e. $\xi  =1$, then $\eta_Q \neq 0$.
\end{Rem}

We  now discuss the possible values of $(\xi,\eta_Q)$  if the underlying signal is bandlimited.
\begin{Th}\label{Thbandlimit}
For any non-zero bandlimited function $\bm{g}(x,y) \in \mathcal {B}_\mathbf{W}$,
 there holds $\xi\leq\sqrt{\lambda_0}$.
\end{Th}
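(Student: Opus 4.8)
The plan is to expand $\bm g$ in the QPSWF basis and read off the energy ratio as a Rayleigh quotient. Since $\bm g\in\mathcal B_{\mathbf W}$, Definition~\ref{def1} tells us that the family $\{\bm\psi_n\}_{n=0}^{\infty}$ is complete in $\mathcal B_{\mathbf W}$, and by Proposition~\ref{Main_Proposition} it is orthonormal in $\mathcal L^2(\R^2;\H)$. Writing the quaternionic coefficients on the left, I would set
\[
\bm g(x,y)=\sum_{n=0}^{\infty}c_n\,\bm\psi_n(x,y),\qquad c_n:=\langle\bm g,\bm\psi_n\rangle\in\H ,
\]
so that the orthonormality on $\R^2$ yields the Parseval relation $\|\bm g\|_{\mathcal L^2}^2=\sum_{n\ge 0}|c_n|^2$ (equal to $1$ under the normalisation $E=1$ adopted in Section~\ref{S5}, and strictly positive since $\bm g\neq\bm{0}$).

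Next I would evaluate the energy that falls inside the time-domain. By definition,
\[
\|D_{\mathbf T}\bm g\|_{\mathcal L^2}^2=\int_{\mathbf T}\bm g(x,y)\,\overline{\bm g(x,y)}\,dxdy .
\]
Substituting the series and integrating term by term, the decisive observation is that the $\mathbf T$-orthogonality of the QPSWFs recorded in Proposition~\ref{Main_Proposition} makes $\int_{\mathbf T}\bm\psi_n\overline{\bm\psi_m}$ a \emph{real} scalar multiple of $\delta_{nm}$, proportional to $\lambda_n$; consequently the sandwiched factors $c_n(\cdots)\overline{c_m}$ collapse to $\lambda_n|c_n|^2\delta_{nm}$ and all cross terms vanish in spite of the non-commutativity of $\H$. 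This gives $\|D_{\mathbf T}\bm g\|_{\mathcal L^2}^2=\sum_{n=0}^{\infty}\lambda_n\,|c_n|^2$.

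The conclusion is then immediate from the ordering of the eigenvalues. By the first item of Proposition~\ref{Main_Proposition} the sequence $\lambda_0\ge\lambda_1\ge\cdots$ has $\lambda_0$ as its maximum, so $\sum_n\lambda_n|c_n|^2\le\lambda_0\sum_n|c_n|^2$, whence
\[
\xi^2=\frac{\|D_{\mathbf T}\bm g\|_{\mathcal L^2}^2}{\|\bm g\|_{\mathcal L^2}^2}
=\frac{\sum_n\lambda_n|c_n|^2}{\sum_n|c_n|^2}\le\lambda_0 ,
\]
that is, $\xi\le\sqrt{\lambda_0}$. An equivalent route is operator-theoretic: using $\bm g=B_{\mathbf W}\bm g$ together with the self-adjointness of $D_{\mathbf T}$ and $B_{\mathbf W}$ one gets $\|D_{\mathbf T}\bm g\|^2_{\mathcal L^2}=\langle B_{\mathbf W}D_{\mathbf T}B_{\mathbf W}\bm g,\bm g\rangle$, and the all-pass filtering form together with Theorem~\ref{Theorem_2} shows that $\bm\psi_n$ is an eigenfunction of $B_{\mathbf W}D_{\mathbf T}B_{\mathbf W}$ with eigenvalue $\lambda_n$, so the bound again follows from the Rayleigh-quotient estimate with top eigenvalue $\lambda_0$.

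I expect the only genuinely delicate point to be the quaternionic bookkeeping in the second step: one must justify that the expansion of $\bm g$ converges in $\mathcal L^2$ over the subregion $\mathbf T$ (not merely over $\R^2$), so that the term-by-term integration over $\mathbf T$ is legitimate, and one must verify that the left-coefficient convention is genuinely compatible with the orthogonality relations of Proposition~\ref{Main_Proposition}, so that no spurious non-real factors survive between $c_n$ and $\overline{c_m}$. Once that is in place, the inequality $\xi\le\sqrt{\lambda_0}$ is just the statement that a convex combination of the $\lambda_n$ cannot exceed $\lambda_0$.
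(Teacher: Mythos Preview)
Your argument is correct and follows the same route as the paper: expand $\bm g\in\mathcal B_{\mathbf W}$ in the complete QPSWF system, use the $\R^2$-orthonormality and the $\mathbf T$-orthogonality from Proposition~\ref{Main_Proposition} to identify $\xi^2$ as a convex combination of the $\lambda_n$, and bound by $\lambda_0$ via the ordering~(\ref{eigenvaluerelation}). Your extra care with the quaternionic bookkeeping (left coefficients, reality of $\int_{\mathbf T}\bm\psi_n\overline{\bm\psi_m}$ so that $c_n(\cdot)\overline{c_m}$ collapses) and the operator-theoretic remark are refinements, not a different method.
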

\begin{proof}
If a  $\H$-valued function is bandlimited, then $\eta_Q=1$.
Since $\{\bm{\psi}_n(x,y)\}_{n=0}^\infty$ is a complete basis in
$\mathcal {B}_\mathbf{W}$ we can expanse $\bm{g}(x,y)$ as follows
\begin{eqnarray}
\bm{g}(x,y)=\sum_{n=0}^\infty a_n \bm{\psi}_{n} (x,y),
\end{eqnarray}
where  $a_n \in \H$. Clearly,
$
\xi^2=\int_{\mathbf{T}}|\bm{g}(x,y)|^2dxdy=
\sum_{n=0}^\infty \lambda_n|a_n|^2=
1 \leq \lambda_0\sum_{n=0}^\infty |a_n|^2,
$
because $\lambda_0\geq\lambda_n$  for all $n$.
Hence, the extremal condition is $\bm{g}^*(x,y)=\bm{\psi}_0(x,y)$.
It follows that $\xi^2=|\bm{g}^*|^2=\lambda_0$.
For any other linear combination of $\bm{\psi}_n$,  $\xi$ is less than $\sqrt{\lambda_0}$.
\end{proof}
As we know, $\xi$ take values in the interval $[0,1]$.
Now we discuss the range of values of $\eta_Q$ when $\xi$ is fixed.
We start by considering the case $\xi=0$.

\begin{Th}\label{timezero}
If the time energy ratio $\xi$  equals to $0$ on $\mathbf{T}$,
the frequency energy ratio $\eta_Q$ is larger or equals to $0$ and
less than $1$  on $\mathbf{W}$ for a $\H$-valued signal.
\end{Th}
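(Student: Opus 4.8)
The plan is to separate the trivial bound from the strict one. The inequality $\eta_Q \geq 0$ needs no work: $\eta_Q^2$ is a quotient of the non-negative quantities $\|D_{\mathbf{W}}\mathcal{F}(\bm{g})\|_Q^2$ and $\|\mathcal{F}(\bm{g})\|_Q^2$, and the denominator is strictly positive since $\bm{g}$ has normalized energy $E=\|\bm{g}\|_{\mathcal{L}^2}^2=1$ and, by Parseval's identity Eq. (\ref{Parseval_Identity}), $\|\mathcal{F}(\bm{g})\|_Q^2=\|\bm{g}\|_{\mathcal{L}^2}^2=1$. Likewise $\eta_Q\leq 1$ always holds, because restricting $\mathcal{F}(\bm{g})$ to $\mathbf{W}$ can only discard energy. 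So the entire content of the theorem is the strict inequality $\eta_Q<1$.

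To prove $\eta_Q<1$ I would argue by contradiction. Suppose $\eta_Q=1$. Then all of the frequency energy of $\bm{g}$ sits in $\mathbf{W}$, i.e. $\mathcal{F}(\bm{g})$ vanishes off $\mathbf{W}$, so $\bm{g}\in\mathcal{B}_{\mathbf{W}}$. By the completeness of the QPSWFs in $\mathcal{B}_{\mathbf{W}}$ (Definition \ref{def1}, Proposition \ref{Main_Proposition}) we may expand $\bm{g}(x,y)=\sum_{n=0}^{\infty}a_n\bm{\psi}_n(x,y)$ with $a_n\in\H$, exactly as in the proof of Theorem \ref{Thbandlimit}. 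Using the orthogonality of the $\bm{\psi}_n$ over $\mathbf{T}$ from Proposition \ref{Main_Proposition}, the time energy of $\bm{g}$ collapses to a series of the form $\xi^2=\|D_{\mathbf{T}}\bm{g}\|_{\mathcal{L}^2}^2=\sum_{n=0}^{\infty}\lambda_n|a_n|^2$ (up to the normalization constant appearing there), just as in Theorem \ref{Thbandlimit}.

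Now the hypothesis $\xi=0$ forces $\sum_{n=0}^{\infty}\lambda_n|a_n|^2=0$. Since every eigenvalue satisfies $\lambda_n>0$ by Proposition \ref{Main_Proposition}, each term vanishes, hence $a_n=0$ for all $n$ and $\bm{g}\equiv\bm{0}$. This contradicts $\|\bm{g}\|_{\mathcal{L}^2}^2=1$ (and would in any case make $\eta_Q$ undefined). Therefore $\eta_Q\neq 1$, and combined with $0\leq\eta_Q\leq 1$ this yields $0\leq\eta_Q<1$, which is the assertion.

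The one point that deserves care — and the only real obstacle — is that the hypothesis ``$\xi=0$ on $\mathbf{T}$'' is strictly weaker than ``$\bm{g}$ is time-limited'', so Lemma \ref{Lementire} cannot be quoted verbatim; its proper replacement here is the eigenfunction expansion together with the strict positivity $\lambda_n>0$. An essentially equivalent alternative would be to observe that each component of a function in $\mathcal{B}_{\mathbf{W}}$ is the restriction of an entire function of exponential type, so vanishing on the positive-measure set $\mathbf{T}$ forces $\bm{g}\equiv\bm{0}$ by analytic continuation; I would nevertheless keep the expansion argument, since it stays entirely within the machinery already developed in Sections \ref{S3}--\ref{S5}.
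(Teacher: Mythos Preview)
Your argument is correct for the literal statement $0\le\eta_Q<1$, and your route to the strict inequality is cleaner than the paper's: the paper simply invokes Lemma~\ref{Lementire} (whose proof, via the Donoho--Stark uncertainty principle, concludes that a bandlimited function cannot vanish on~$\mathbf{T}$), while you reprove this from the expansion $\bm{g}=\sum a_n\bm{\psi}_n$ and the strict positivity $\lambda_n>0$. Both are valid; yours is more self-contained.

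Where your proposal diverges from the paper is in \emph{scope}. Despite the terse wording of the theorem, the paper's proof (Appendix~A) is not content with the bound $\eta_Q<1$: the bulk of the argument is constructive and shows that the bound is \emph{sharp}, i.e.\ that for $\xi=0$ the ratio $\eta_Q$ actually ranges over all of $[0,1)$. It does this in two pieces: first, it builds the signal
\[
\bm{g}^*(x,y)=\frac{\bm{\psi}_n(x,y)-D_{\mathbf T}\bm{\psi}_n(x,y)}{\sqrt{1-\lambda_n}},
\]
checks $\bm{g}^*\in\mathcal A$ with $\xi=0$, and computes $\eta_Q=\sqrt{1-\lambda_n}$ (for even $n$), which can be pushed arbitrarily close to~$1$; second, it modulates $\bm{g}^*$ by $e^{\mathbf{i}rx}$ and lets $r\to\infty$ to drive $\eta_Q$ toward~$0$. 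This sharpness is what feeds into the description of the admissible $(\xi,\eta_Q)$ region in Figure~\ref{fig.relationship}, and your proof does not supply it. If the goal is only the inequality as stated, you are done; if the goal is to match the paper's actual content, you are missing the two explicit constructions.
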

\begin{proof}  See Appendix A.
\end{proof}

From the property of symmetry of the QFT we conclude that
all the properties for bandlimited functions proved so far have corresponding time-limited counterparts.
For example, if $\xi=0$,  then we can get $0\leq\eta_Q <1$.
If $\eta_Q=0$,  then it follows that $0\leq\xi <1$.
We can conclude a similar result to Theorem \ref{Thbandlimit} for the extremal value $\xi=1$.
For any non-zero time-limited function $\bm{g}(x,y)\in \mathcal {D}_\mathbf{T}$,
i.e.  for which $\xi=1$, we conclude that $\eta_Q\leq\sqrt{\lambda_0}$.
If $\eta_Q=\sqrt{\lambda_0}$, then $\bm{g}^*(x,y)=\frac{D_{\mathbf{T}}\bm{\psi}_0(x,y)}{\sqrt{\lambda_0}}$.

We now prove that for arbitrary $\H$-valued signals for which $0<\xi<\sqrt{\lambda_0}$, $\eta_Q$ is not limited.
\begin{Lem}
For $0< \xi < \sqrt{\lambda _{0}}$, $\eta_Q$ can take any value in the interval $[0,1]$ for non-zero $\H$-valued signals.
\end{Lem}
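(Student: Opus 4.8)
The plan is to reduce the claim to the intermediate value theorem, applied along a path of unit-norm signals on which the time-concentration stays pinned at the prescribed value $\xi_0\in(0,\sqrt{\lambda_0})$ while $\eta_Q$ sweeps continuously from $0$ to $1$. Normalise $\|\bm g\|_{\mathcal L^2}=1$ throughout; then by Parseval's identity (Eq.~(\ref{Parseval_Identity})) $\eta_Q(\bm g)^2=\|D_{\mathbf W}\mathcal F(\bm g)\|_Q^2$ and $\xi(\bm g)^2=\|D_{\mathbf T}\bm g\|_{\mathcal L^2}^2$, where $\mathcal F$ preserves the $\mathcal L^2$-norm and $D_{\mathbf T}$, $D_{\mathbf W}$, $B_{\mathbf W}$ are projections of norm at most $1$. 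The first endpoint is immediate: a unit signal has $\eta_Q=1$ exactly when it is bandlimited, and for $\bm g=\sum_n a_n\bm\psi_n\in\mathcal B_{\mathbf W}$ Theorem~\ref{Thbandlimit} and its proof give $\xi(\bm g)^2=\sum_n\lambda_n|a_n|^2$ subject to $\sum_n|a_n|^2=1$; since the $\lambda_n$ decrease from $\lambda_0$ down to $0$, this quantity realises every value in $(0,\lambda_0)$, so there is a bandlimited unit signal $\bm g^{(1)}$ with $\xi(\bm g^{(1)})=\xi_0$.

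For the other endpoint one needs $\eta_Q=0$, that is $\mathcal F(\bm g)\equiv\bm0$ on $\mathbf W$, equivalently $\bm g\in\mathcal B_{\mathbf W}^{\perp}$; such signals (the inverse QFT of $\mathcal L^2$ functions supported in $\mathbf W^{c}$) form a nonzero infinite-dimensional subspace, and crucially they need not be time-limited, so no Paley--Wiener obstruction arises. On the connected set $\mathcal B_{\mathbf W}^{\perp}\setminus\{\bm0\}$ the functional $\bm g\mapsto\xi(\bm g)^2$ is continuous, so its range is an interval. Translating a fixed nonzero element of $\mathcal B_{\mathbf W}^{\perp}$ in the $x$-variable keeps it in $\mathcal B_{\mathbf W}^{\perp}$ (this only multiplies $\mathcal F(\bm g)$ by a unimodular factor) and drives $\xi^2$ to $0$; on the other hand, using the low-pass form Eq.~(\ref{Q2}) one finds that normalising $(I-B_{\mathbf W})D_{\mathbf T}\bm\psi_k$ gives a unit element of $\mathcal B_{\mathbf W}^{\perp}$ with $\xi^2=1-\lambda_k$, which tends to $1$. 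Hence the interval contains $(0,1)$ and in particular $\xi_0^2$, so there is a unit signal $\bm g^{(0)}\in\mathcal B_{\mathbf W}^{\perp}$ with $\xi(\bm g^{(0)})=\xi_0$ and $\eta_Q(\bm g^{(0)})=0$.

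It remains to join $\bm g^{(0)}$ and $\bm g^{(1)}$. Splitting $\bm g=D_{\mathbf T}\bm g+D_{\mathbf T^c}\bm g$, a unit signal satisfies $\xi(\bm g)=\xi_0$ precisely when $\bm g=\xi_0\,\bm a+\sqrt{1-\xi_0^2}\,\bm b$ with $\bm a$ a unit vector of $\mathcal L^2(\mathbf T,\H)$ and $\bm b$ a unit vector of $\mathcal L^2(\mathbf T^c,\H)$; hence the level set $S_{\xi_0}:=\{\bm g:\|\bm g\|_{\mathcal L^2}=1,\ \xi(\bm g)=\xi_0\}$ is homeomorphic to the product of the unit spheres of these two infinite-dimensional Hilbert spaces, and is therefore path-connected (join two points by radially projecting segments between their components, inserting a midpoint if a pair is antipodal). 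Pick a continuous path $\gamma:[0,1]\to S_{\xi_0}$ with $\gamma(0)=\bm g^{(0)}$ and $\gamma(1)=\bm g^{(1)}$. Along $\gamma$ the time-concentration equals $\xi_0$ identically, while $t\mapsto\eta_Q(\gamma(t))^2=\|D_{\mathbf W}\mathcal F(\gamma(t))\|_Q^2$ is continuous; since $\eta_Q(\gamma(0))=0$ and $\eta_Q(\gamma(1))=1$, the intermediate value theorem yields, for every $\eta\in[0,1]$, a parameter $t$ with $\eta_Q(\gamma(t))=\eta$. As $\gamma(t)$ is a non-zero $\H$-valued signal with $\xi(\gamma(t))=\xi_0$, the lemma follows.

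The continuity assertions and the bandlimited endpoint are routine; the real obstacle is the $\eta_Q=0$ endpoint, where one must certify that $\mathcal B_{\mathbf W}^{\perp}$ really does contain unit signals of every prescribed time-concentration in $(0,1)$ -- the tempting but false guess being that a signal whose transform vanishes on $\mathbf W$ must live mostly off $\mathbf T$ -- so the two limiting computations (translation forcing $\xi\to0$, and the projected $D_{\mathbf T}\bm\psi_k$ forcing $\xi\to1$) are the crux. A secondary point needing care is the path-connectedness of $S_{\xi_0}$, which is precisely where the infinite dimensionality of $\mathcal L^2(\mathbf T,\H)$ and $\mathcal L^2(\mathbf T^c,\H)$ is used.
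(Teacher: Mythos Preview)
Your proof is correct but takes a genuinely different route from the paper's. The paper gives an explicit bandlimited signal
\[
\bm{g}^*(x,y)=\frac{\sqrt{\xi^2-\lambda_{n}}\,\bm{\psi}_{0}(x,y)+\sqrt{\lambda_{0}-\xi^2}\,\bm{\psi}_{n}(x,y)}{\sqrt{\lambda_{0}-\lambda_{n}}},
\]
choosing $n$ with $\lambda_n<\xi^2$, verifies directly that $\|\bm{g}^*\|_{\mathcal L^2}=1$, $\|D_{\mathbf T}\bm{g}^*\|_{\mathcal L^2}=\xi$ and $\eta_Q=1$, and for the remaining values defers to the modulation device of Theorem~\ref{timezero}: the one-parameter family $r\mapsto e^{\mathbf{i}rx}\bm{g}^*(x,y)$ leaves $|\bm{g}^*|$ (hence $\xi$) unchanged while $\eta_Q(r)$ varies continuously from $1$ at $r=0$ toward $0$ as $r\to\infty$. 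You instead construct the two endpoints separately --- a bandlimited signal for $\eta_Q=1$ and an element of $\mathcal B_{\mathbf W}^{\perp}$ for $\eta_Q=0$ --- and then invoke the path-connectedness of the level set $S_{\xi_0}$, identified as a product of infinite-dimensional unit spheres, together with the intermediate value theorem. The paper's argument is more economical and fully constructive, a single explicit family doing all the work; yours is more structural and has the merit of actually reaching $\eta_Q=0$ (the modulation trick only drives $\eta_Q$ arbitrarily close to $0$). Your computation $\xi^2=1-\lambda_k$ for the normalised $(I-B_{\mathbf W})D_{\mathbf T}\bm\psi_k$ is a clean detail not present in the paper.
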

\begin{proof}
For $0< \xi < \sqrt{\lambda _{0}}$, the sequence $\{\lambda_{n} \}_{n=1}^{\infty}$ is
monotone decreasing in the interval $(0,1)$, and $\lambda_{n} \rightarrow 0$ when $n$ approaches infinity.
 Hence there exists an eigenvalue  such that $\lambda_{n} <\xi$. We consider the signal
\begin{eqnarray}
\bm{g}^*(x,y) = \frac{\sqrt{\xi^2-\lambda_{n} }\bm{\psi}_{0} (x,y) + \sqrt{\lambda_{0} -\xi^2 }\bm{\psi}_{n} (x,y)}{\sqrt{\lambda_{0}-\lambda_{n}}},
\end{eqnarray}
where $\bm{\psi}_{n}(x,y)$ is the eigenfunction corresponding to the eigenvalue $\lambda_{n}$.
We can find  that $\bm{g}^*(x,y) \in \mathcal {B}_\mathbf{W}$ since
$\bm{\psi}_{0} (x,y), \bm{\psi}_{n} (x,y) \in \mathcal {B}_\mathbf{W}$.
 Straightforward computations show that
\begin{eqnarray*}
\|\bm{g}^*\|_{\mathcal{L}^2}^2 &=& \frac{1}{\lambda_{0} -
\lambda_{n}} \Big[(\xi^2-\lambda_{n}) + (\lambda_{0} -\xi^2) \Big] = 1,\\
\|D_{\mathbf{T}}\bm{g}^*\|_{\mathcal{L}^2}^2
&=&
\frac{1}{\lambda_{0} - \lambda_{n}} \Big[(\xi^2-\lambda_{n}) \lambda_{0} + (\lambda_{0} - \xi^2)\lambda_{n} \Big] = \xi^2,
\end{eqnarray*}
which implies that $\bm{g}^*(x,y)\in \mathcal{A}$.  We conclude that
\begin{eqnarray*}
\eta_Q^2&=& \|\mathcal {F}(B_{\mathbf{W}}\bm{g}^*)\|_Q^2
=\sum_{i=0}^3\int_{\mathbf{W}}|\mathcal {F}(g^*_i)|^2 dudv\\
&=&\sum_{i=0}^3\int_{\R^2}|B_{\mathbf{W}}g^*_i|^2dxdy
=\frac{4}{4(\lambda_{0}-\lambda_{n})} \Big[(\xi^2-\lambda_{n})+(\lambda_{0} -\xi^2)\Big]=1.
\end{eqnarray*}
Here, the last equality used the following result that  for $i=0,1, 2, 3$,
\begin{eqnarray*}
\int_{\R^2}|B_{\mathbf{W}}g^*_i|^2dxdy
&=&\frac{1}{4(\lambda_{0}-\lambda_{n})}\int_{\R^2}
\Big(\sqrt{\xi^2-\lambda_{n} }\psi_{0,0}(x,y)+\sqrt{\lambda_{0} -\xi^2 }\psi_{n,0} (x,y)\Big)\\
&& \overline{\Big(\sqrt{\xi^2-\lambda_{n} }\psi_{0,0} (x,y) +
\sqrt{\lambda_{0} -\xi^2 }\psi_{n,0} (x,y)\Big)}dxdy\\
&=&\frac{1}{4(\lambda_{0}-\lambda_{n})} \Big(
\int_{\R^2}(\xi^2-\lambda_{n})\psi_{0,0} (x,y)\overline{\psi_{n,0} (x,y)}dxdy\\
&& -\int_{\R^2}\sqrt{(\xi^2-\lambda_{n})(\lambda_{0} -\xi^2) }\psi_{0,0} (x,y)\overline{\psi_{n,0} (x,y)}dxdy\\
&& -\int_{\R^2}\sqrt{(\xi^2-\lambda_{n})(\lambda_{0} -\xi^2) }\psi_{n,0} (x,y)\overline{\psi_{0,0} (x,y)}dxdy\\
&&+\int_{\R^2}(\lambda_{0} -\xi^2)\psi_{n,0} (x,y)\overline{\psi_{n,0} (x,y)}dxdy)\Big)\\
&=&\frac{(\xi^2-\lambda_{n})+(\lambda_{0} -\xi^2)}{4(\lambda_{0}-\lambda_{n})}.
\end{eqnarray*}
Thus, if $0< \xi < \sqrt{\lambda _{0}}$,  then there exists a signal such that $\eta_Q = 1$.
The verification of $0 \leq \eta_Q <1$ is similar to Theorem \ref{timezero}.
\end{proof}

We conclude this section by studying the range of possible values of $\eta$ for which $\sqrt{\lambda _{0}}\leq \xi<1$.
\begin{Th}\label{ThUP}
The maximum of $\eta_Q$ is  assumed by
\begin{eqnarray}
\arccos \xi +\arccos \eta_Q = \arccos \sqrt{\lambda _{0}},
\end{eqnarray}
as such $\sqrt{\lambda _{0}}\leq \xi < 1$,  where $\lambda _{0}$ is the largest eigenvalue of Eq. (\ref{Q2}).
\end{Th}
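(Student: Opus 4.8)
The plan is to recast the two energy ratios as cosines of the angles that $\bm{g}$ makes with the subspaces $\mathcal{D}_\mathbf{T}$ and $\mathcal{B}_\mathbf{W}$, then combine the triangle inequality for the angular distance with Theorem \ref{Thangle} to obtain the bound, and finally exhibit an extremal signal on which every inequality is sharp. First I would normalize $\|\bm{g}\|_{\mathcal{L}^2}=1$ and observe, via Parseval's identity Eq. (\ref{Parseval_Identity}) applied to $B_\mathbf{W}\bm{g}$, that $\eta_Q=\|B_\mathbf{W}\bm{g}\|_{\mathcal{L}^2}$. Since $D_\mathbf{T}$ and $B_\mathbf{W}$ are self-adjoint idempotents for the real inner product $<\cdot,\cdot>_0$, one gets $<\bm{g},D_\mathbf{T}\bm{g}>_0=\|D_\mathbf{T}\bm{g}\|_{\mathcal{L}^2}^2=\xi^2$ and $<\bm{g},B_\mathbf{W}\bm{g}>_0=\eta_Q^2$. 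If $\eta_Q=0$ the asserted bound is trivial, so assume $\eta_Q>0$; then, by the orthogonal-projection computation behind Lemma \ref{Lemtheta}, which applies verbatim to an arbitrary nonzero $\bm{g}$, one has $\inf_{\bm{h}\in\mathcal{D}_\mathbf{T}}\arg(\bm{g},\bm{h})=\arccos\xi$ and $\inf_{\bm{h}\in\mathcal{B}_\mathbf{W}}\arg(\bm{g},\bm{h})=\arccos\eta_Q$, the infima being attained at $\bm{h}_\mathbf{T}:=D_\mathbf{T}\bm{g}/\|D_\mathbf{T}\bm{g}\|_{\mathcal{L}^2}\in\mathcal{D}_\mathbf{T}$ and $\bm{h}_\mathbf{W}:=B_\mathbf{W}\bm{g}/\|B_\mathbf{W}\bm{g}\|_{\mathcal{L}^2}\in\mathcal{B}_\mathbf{W}$ respectively.

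Next I would use that, on unit vectors, $\arg(\bm{u},\bm{v})=\arccos<\bm{u},\bm{v}>_0$ is the geodesic distance on the unit sphere of $\mathcal{L}^2(\R^2,\H)$ viewed as a real Hilbert space, hence satisfies the triangle inequality. Applying it to $\bm{h}_\mathbf{T},\bm{g},\bm{h}_\mathbf{W}$ and invoking Theorem \ref{Thangle} gives
\[
\arccos\sqrt{\lambda_0}
=\inf_{\bm{f}\in\mathcal{B}_\mathbf{W},\,\bm{h}\in\mathcal{D}_\mathbf{T}}\arg(\bm{f},\bm{h})
\le\arg(\bm{h}_\mathbf{T},\bm{h}_\mathbf{W})
\le\arccos\xi+\arccos\eta_Q.
\]
Because $\sqrt{\lambda_0}\le\xi<1$ forces $0<\arccos\xi\le\arccos\sqrt{\lambda_0}<\pi/2$, this rearranges to $\arccos\eta_Q\ge\arccos\sqrt{\lambda_0}-\arccos\xi\ge0$, and the monotonicity of $\arccos$ yields $\eta_Q\le\cos(\arccos\sqrt{\lambda_0}-\arccos\xi)$. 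Hence the largest value $\eta_Q$ can attain is the one satisfying $\arccos\xi+\arccos\eta_Q=\arccos\sqrt{\lambda_0}$.

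It remains to show this value is actually achieved. Guided by the equality cases above, the extremal signal should lie in the real two-plane spanned by $\bm{\psi}_0$ and $D_\mathbf{T}\bm{\psi}_0$, since $\bm{\psi}_0\in\mathcal{B}_\mathbf{W}$ and $D_\mathbf{T}\bm{\psi}_0\in\mathcal{D}_\mathbf{T}$ are the vectors realizing the minimal angle in Theorem \ref{Thangle}. I would set $\bm{g}^*:=\alpha\bm{\psi}_0+\beta D_\mathbf{T}\bm{\psi}_0$ with $\alpha,\beta\in\R$, use $D_\mathbf{T}^2=D_\mathbf{T}$, $\|D_\mathbf{T}\bm{\psi}_0\|_{\mathcal{L}^2}^2=\lambda_0$ (Remark following Proposition \ref{Main_Proposition}) and $<\bm{\psi}_0,D_\mathbf{T}\bm{\psi}_0>_0=\lambda_0$ to turn $\|\bm{g}^*\|_{\mathcal{L}^2}^2$ and $\|D_\mathbf{T}\bm{g}^*\|_{\mathcal{L}^2}^2$ into quadratics in $\alpha,\beta$, and then solve $\|\bm{g}^*\|_{\mathcal{L}^2}^2=1$ and $\|D_\mathbf{T}\bm{g}^*\|_{\mathcal{L}^2}^2=\xi^2$ in reals (possible precisely for $\lambda_0\le\xi^2<1$, with the sign chosen so that $\alpha=\sqrt{(1-\xi^2)/(1-\lambda_0)}\ge0$ and $\beta=\xi/\sqrt{\lambda_0}-\alpha$). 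The decisive simplification is that, by the all-pass filtering form and Eq. (\ref{Q2}), $B_\mathbf{W}\bm{\psi}_0=\bm{\psi}_0$ and $B_\mathbf{W}D_\mathbf{T}\bm{\psi}_0=K\bm{\psi}_0=\lambda_0\bm{\psi}_0$, so $B_\mathbf{W}\bm{g}^*=(\alpha+\lambda_0\beta)\bm{\psi}_0$ is a scalar multiple of $\bm{\psi}_0$ and therefore $\eta_Q(\bm{g}^*)=|\alpha+\lambda_0\beta|=\sqrt{\lambda_0}\,\xi+\sqrt{(1-\lambda_0)(1-\xi^2)}=\cos(\arccos\sqrt{\lambda_0}-\arccos\xi)$. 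Since moreover $D_\mathbf{T}\bm{g}^*=(\alpha+\beta)D_\mathbf{T}\bm{\psi}_0\propto D_\mathbf{T}\bm{\psi}_0$ and $B_\mathbf{W}\bm{g}^*\propto\bm{\psi}_0$, the vector $\bm{g}^*$ sits on the extremal geodesic and every inequality above is an equality for it, so the bound is attained.

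I expect the attainment step to be the main obstacle: producing the right constants $\alpha,\beta$ and verifying that $\bm{g}^*$ makes the whole angle chain tight — in particular that its bandlimited projection is exactly parallel to $\bm{\psi}_0$, which rests on the eigen-relation $K\bm{\psi}_0=\lambda_0\bm{\psi}_0$ of Eq. (\ref{Q2}). A secondary point needing care is the justification that $\arg(\cdot,\cdot)$, built from the scalar-part inner product $<\cdot,\cdot>_0$, genuinely behaves as an angular metric on $\mathcal{L}^2(\R^2,\H)$ regarded as a real Hilbert space, so that both the triangle inequality and the best-approximant description of the angle to a closed subspace are legitimate; one should also record that for $\xi<\sqrt{\lambda_0}$ the constraint is vacuous (consistent with the preceding lemma), so the statement is only nontrivial in the range stated.
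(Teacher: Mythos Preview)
Your argument is correct and reaches the same conclusion as the paper, but the route to the inequality $\arccos\xi+\arccos\eta_Q\ge\arccos\sqrt{\lambda_0}$ is genuinely different. The paper follows the classical Landau--Pollak computation: it writes $\bm{g}=\lambda D_{\mathbf T}\bm{g}+\mu B_{\mathbf W}\bm{g}+\bm{g}^{*}$ with $\bm{g}^{*}$ orthogonal to both projections, takes scalar inner products componentwise with $g_i$, $D_{\mathbf T}g_i$, $B_{\mathbf W}g_i$, $g_i^{*}$, eliminates $\lambda,\mu,\|\bm{g}^{*}\|^2$ algebraically, and arrives at a quadratic in $\eta_Q$ which, together with Theorem~\ref{Thangle}, gives the bound. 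You instead recognize that $\arg(\cdot,\cdot)$ built from $\langle\cdot,\cdot\rangle_0$ is the geodesic distance on the unit sphere of the real Hilbert space $\mathcal L^2(\R^2,\H)$, then apply the spherical triangle inequality to $\bm h_{\mathbf T},\bm g,\bm h_{\mathbf W}$ and bound $\arg(\bm h_{\mathbf T},\bm h_{\mathbf W})$ below by Theorem~\ref{Thangle}. Your approach is shorter and more conceptual, and it makes transparent why the equality case lives on the two-plane $\mathrm{span}_{\R}\{\bm\psi_0,D_{\mathbf T}\bm\psi_0\}$; the paper's computation has the advantage of being entirely elementary (no metric-geometry lemma to justify) and of matching the historical source. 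For the attainment step both proofs coincide: the same extremizer $\bm g^{*}=p\bm\psi_0+qD_{\mathbf T}\bm\psi_0$ with $p=\sqrt{(1-\xi^2)/(1-\lambda_0)}$, $q=\xi/\sqrt{\lambda_0}-p$ is produced, and your verification via $B_{\mathbf W}D_{\mathbf T}\bm\psi_0=K\bm\psi_0=\lambda_0\bm\psi_0$ is exactly the mechanism the paper relies on implicitly. The two caveats you flag---that $\arg$ is a bona fide metric on the real-Hilbert unit sphere, and that the best-angle description from Lemma~\ref{Lemtheta} extends from $\bm f\in\mathcal B_{\mathbf W}$ to arbitrary $\bm g$---are legitimate but routine once one views $(\mathcal L^2(\R^2,\H),\langle\cdot,\cdot\rangle_0)$ as a real inner-product space with $D_{\mathbf T},B_{\mathbf W}$ as orthogonal projections.
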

\begin{proof} See Appendix B.
\end{proof}
Here, the result is the same as the real-valued case. But the functions we considered are $\H$-valued.
That means, we generalized the classical results to covered a bigger space.

\begin{Rem}
From  Theorem \ref{ThUP} it follows that the set of all possible pairs $(\xi,\eta_Q)$ is the region
enclosed by Figure \ref{fig.relationship} bounded by the curve $\arccos \xi +\arccos \eta_Q \geq \arccos \sqrt{\lambda _{0}}$.
Although this curve depends on  the parameter $c$,  for simplicity of presentation we take $c=1$.
\begin{figure}[!h]
  \centering
    \includegraphics[width=10cm]{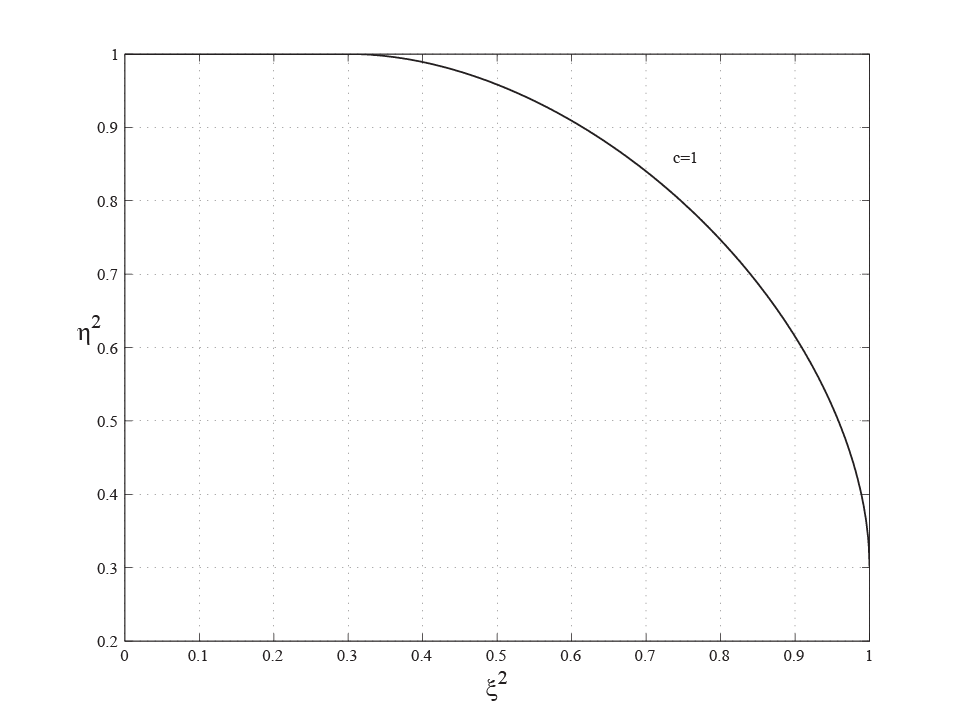}
  \caption{The relationship between $\xi^2$ and $\eta_Q^2$ for $c=1$.}
 \label{fig.relationship}
\end{figure}
\end{Rem}

\begin{Rem}
By means of the classical one-dimensional PSWFs we now construct a special case of a QPSWFS as follows
\begin{eqnarray}
\bm{\psi}_0(x,y)=\frac{1}{2}\varphi_0(x) \varphi_0(y)(1+\mathbf{i}+\mathbf{j}+\mathbf{k}),
\end{eqnarray}
where $\varphi_0$ is the first real-valued PSWFs of zero order in one-dimension.

\begin{table*}[ih]
\caption{The values of $(\xi, \eta_Q)$ and $\lambda$ for the first six QPSWFs.}
\bigskip\renewcommand\arraystretch{1.5}
\begin{center}
\begin{tabular}
{c|c|c|c|c|c|c}
\hline\cline{1-7}
${\rm Functions}$      &$\bm{\psi}_0$  &$\bm{\psi}_1$  &$\bm{\psi}_2$  &$\bm{\psi}_3$  &$\bm{\psi}_4$  &$\bm{\psi}_5$  \\
\hline
$\lambda$    &0.9999   &0.9951  &0.9206  &0.5211   &0.0754  &0.0018 \\
$\xi$             & 0.9194   & 0.5883  &0.2445  &0.2335   &0.2221  &0.2050 \\
$\eta_Q$          &1.0000   &0.9998  &1.0000  &0.9862   &1.0000  &0.9381 \\
\hline\cline{1-7}
\end{tabular}
\end{center}
\label{tab:1}
\end{table*}

We will compare six pairs of $(\xi,\eta_Q)$ of most energy concentrate function under the condition $c=1$.
For example, we compare the signals $\bm{\psi}_0(x,y)$ and $\bm{\psi}_5(x,y)$.
The signal $\bm{\psi}_0(x,y)$ is the most energy concentrate function in the frequency domain,
while $\bm{\psi}_5(x,y)$ is not concentrate in the centre of the frequency domain.
In Table \ref{tab:1} we list the first six pairs of $(\xi,\eta_Q)$ and $\lambda$ for the first QPSWFs.
The energy of $\bm{\psi}_0(x,y)$, $\bm{\psi}_2(x,y)$ and $\bm{\psi}_4(x,y)$ in the frequency domain is $1$,
while the others are less than $1$.
This is mainly because these  signals $\bm{\psi}_i$ $(i=0,2,4, \dots)$ have a single peak in the frequency domain,
and the corresponding energy is mainly concentrate in this peak.
For $\bm{\psi}_i$, $i=1,3,5, \dots$, there are four peaks.
In this case the energy is dispersed in a larger bandwidth.
As shown in Figure \ref{fig.QPSWFS},  the function $\bm{\psi}_5(x,y)$  has four peaks in the frequency
domain.
\begin{figure}[!h]
  \centering
    \includegraphics[width=6.0cm]{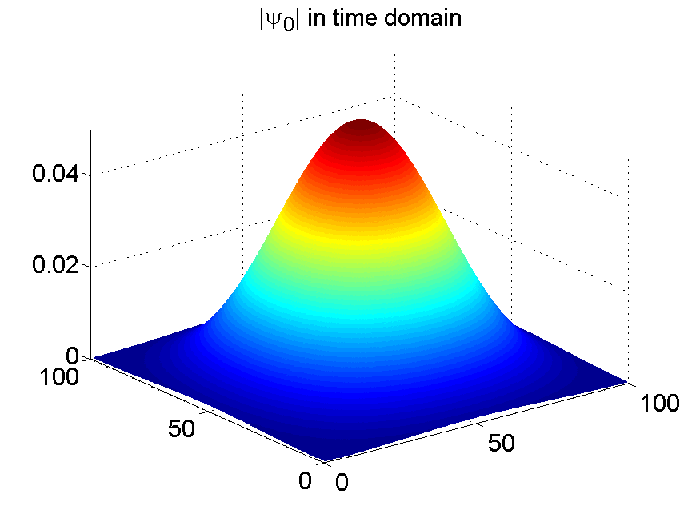}
    \includegraphics[width=6.0cm]{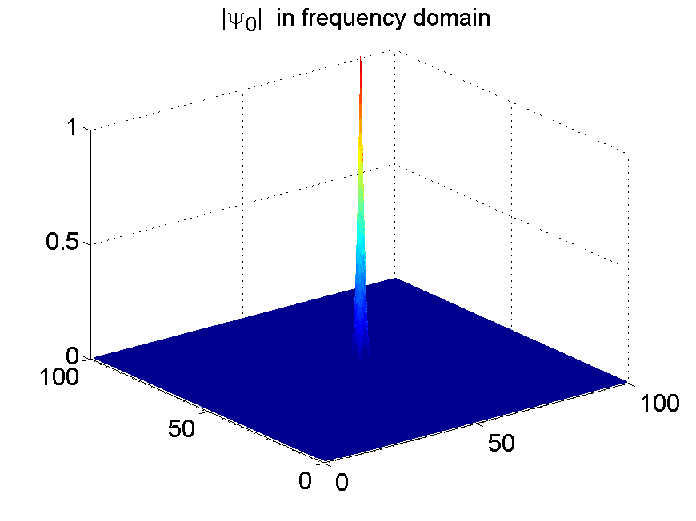}
    \par
    \includegraphics[width=6.0cm]{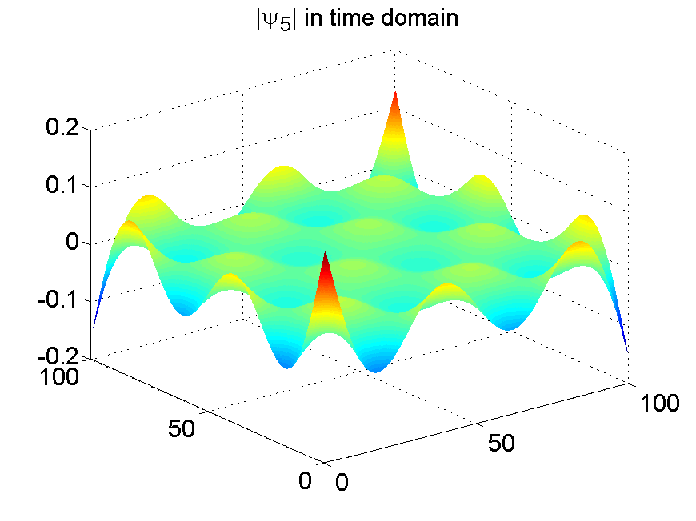}
    \includegraphics[width=6.0cm]{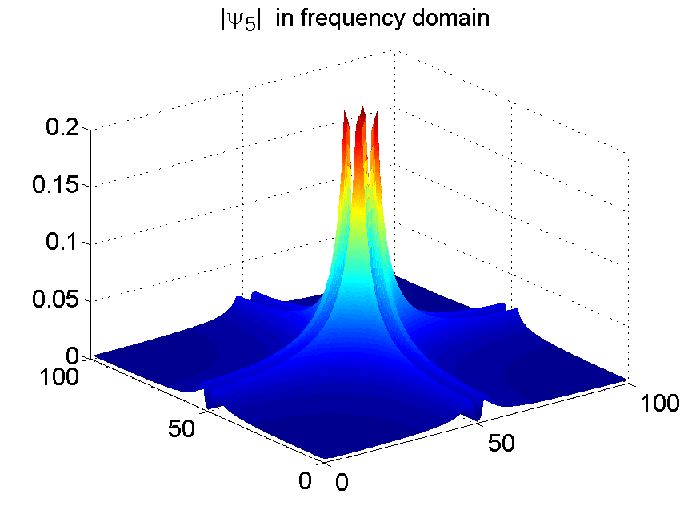}
  \caption{The modulus of QPSWFs $\psi_0(x,y)$ and $\psi_5(x,y)$ in  time and frequency domains. }
 \label{fig.QPSWFS}
\end{figure}

The results in Table \ref{tab:1} also support  the obtained energy concentration proportions of
a signal as such $\arccos \xi +\arccos \eta \geq \arccos \sqrt{\lambda _{0}}$.
In fact, when $\arccos\eta=\arccos1=0$, it follows that the corresponding smallest value is
$\arccos\xi=\arccos 0.9194 =0.4042 > \arccos \sqrt{\lambda_0} =0.0141$.
\end{Rem}

\section{QPSWFs in Bandlimited Extrapolation Problem}
\label{S6}
In the present paper, the QPSWFs have found to be the most energy concentration signals and for any
$\mathbf{W}$-bandlimited $\H$-valued signal $\bm{f}$ can be expanded into a series
\begin{eqnarray}
\bm{f}(x,y)=\sum_{j=0}^{\infty}a_j\bm{\psi}_n(x,y).
\end{eqnarray}
However, the $\mathbf{W}$-bandlimited $\bm{f}$ is finite segment in space domain empirically
(assume the domain known is $\mathbf{D}:=[-d,d]\times[-d,d]$), i.e.,
\begin{eqnarray}
\bm{g}(x,y)=\bm{f}(x,y)\chi_{\mathbf{D}}(x,y),
\end{eqnarray}
where $\chi_{\mathbf{D}}(x,y):= \left\{
\begin{array}{ll}
1, & (x,y)\in \mathbf{D},\\
[1.5ex]
0,  & {\rm otherwise}.
\end{array}\right.$
How to get the $\mathbf{W}$-bandlimited $\bm{f}$ in the outside of the segment areas is known as
an extrapolation problem.
There is an iteration method to get the $\bm{f}$ in Figure \ref{fig.flow}.
\begin{figure}[!h]
  \centering
    \includegraphics[width=9.0cm]{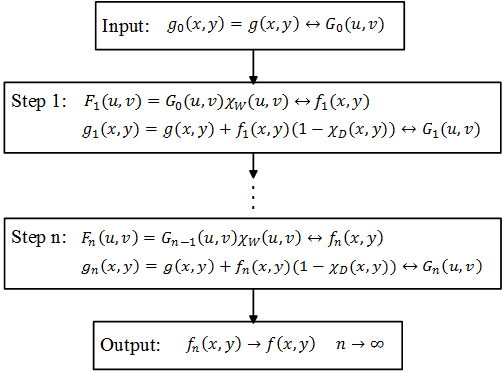}
     \caption{The flow of the iteration.}
 \label{fig.flow}
\end{figure}

In this iteration method, the unknown information for $\bm{f}$ outsider the $\mathbf{D}$ is filled step by step.
The $n$-th step is
\begin{eqnarray*}
\F(\bm{f}_n)(u,v)=\F(\bm{g}_{n-1})(u,v)\chi_{\mathbf{ W}}(u,v)\leftrightarrow
\bm{f}_n(x,y)=\bm{g}_{n-1}(x,y)\ast \left(\frac{\sin Wx}{\pi x}\frac{\sin Wy}{\pi y}\right),
\end{eqnarray*}
where
$
\bm{g}_{n-1}(x,y)=\bm{g}(x,y)+\bm{f}_{n-1}(x,y)(1-\chi_{\mathbf{D}}(x,y))= \left\{
\begin{array}{ll}
\bm{g}(x,y), & (x,y)\in \mathbf{D},\\
[1.5ex]
\bm{f}_{n-1}(x,y),  & {\rm otherwise}.
\end{array}\right.
$
We also show the first step for the scalar part  of a $\H$-valued signal $\bm{f}(x,y)$ in Figure \ref{fig.step1}.
Clearly, there is some new information joined in $\bm{g}(x,y)$ in the first step of the iteration process.
Specifically, $\bm{g}(x,y)$ becomes  $\bm{g}_1(x,y)$  in  Figure \ref{fig.step1}.
\begin{figure}
  \centering
    \includegraphics[width=6.0cm]{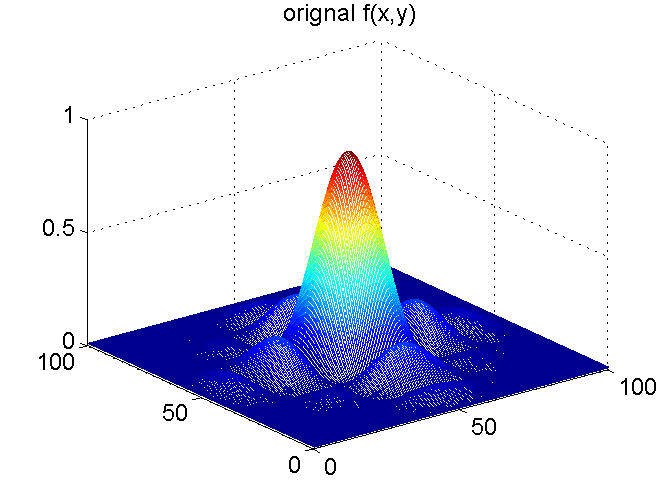}
    \includegraphics[width=6.0cm]{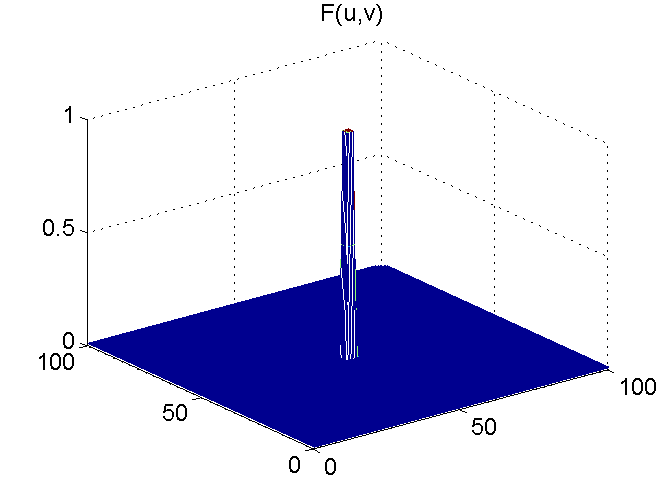}
    \par
    \includegraphics[width=6.0cm]{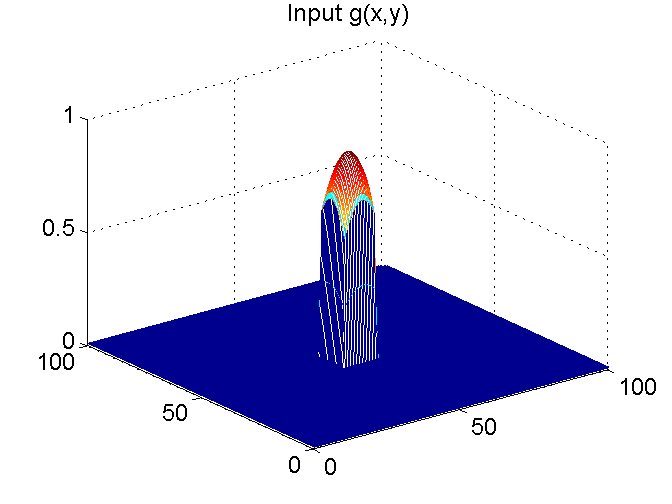}
    \includegraphics[width=6.0cm]{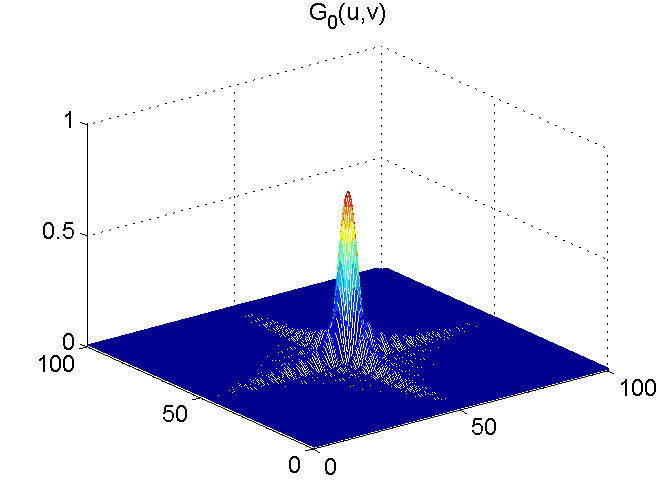}
    \par
    \includegraphics[width=6.0cm]{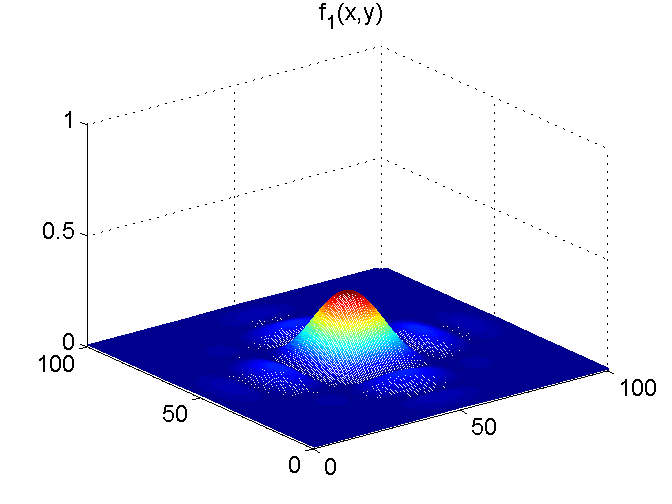}
    \includegraphics[width=6.0cm]{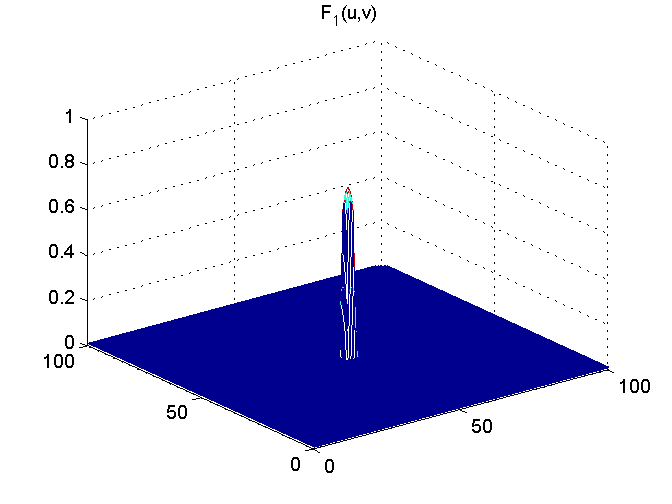}
   \par
    \includegraphics[width=6.0cm]{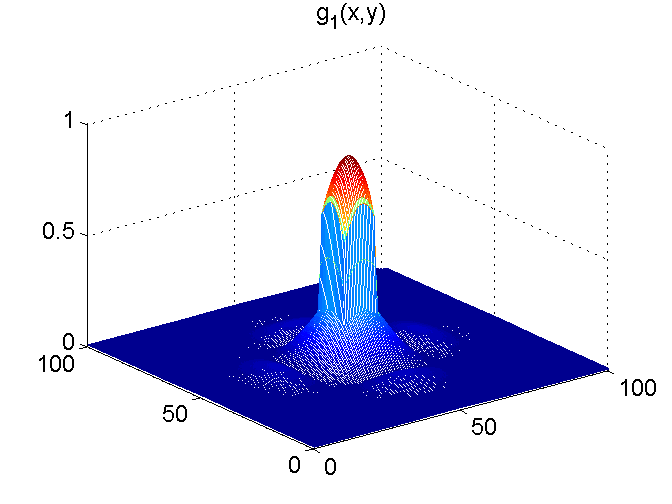}
    \includegraphics[width=6.0cm]{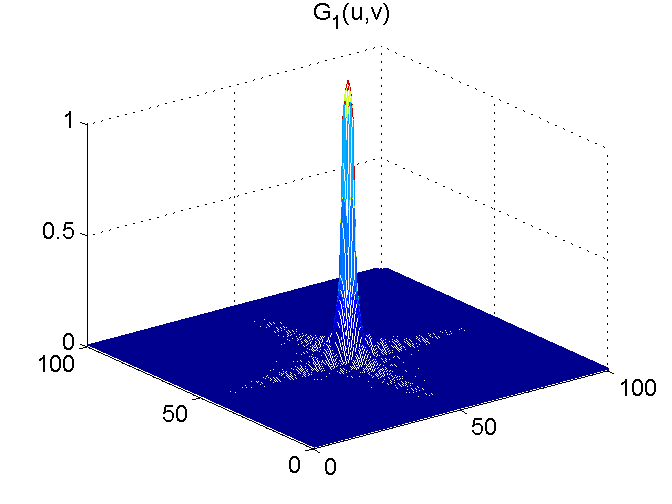}
  \caption{The first step of the scalar part of  an input $\bm{g}(x,y)$.}
 \label{fig.step1}
\end{figure}

Now we will show that the iteration method is effective, i.e., $\bm{f}_n\rightarrow \bm{f}$, $n\rightarrow\infty$.
In order to get this result, we first show the following lemma.
\begin{Lem}\label{Lem:fandfn}
The function of the $n$-th iteration is given by
\begin{eqnarray}
\bm{f}_n(x,y)=\bm{f}(x,y)-\sum_{j=0}^{\infty}a_j(1-\lambda_j)^n\bm{\psi}_j(x,y),
\end{eqnarray}
where $\lambda_j$ are the eigenvalues of QPSWFs $\bm{\psi}_j(x,y)$.
\end{Lem}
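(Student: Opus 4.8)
The plan is to rewrite the extrapolation iteration of Figure~\ref{fig.flow} as a Neumann-type iteration for the operator $K$ of Section~\ref{S3}. Throughout I identify the known space-domain $\mathbf{D}$ with the time-domain $\mathbf{T}$ of Eqs.~(\ref{Q1})--(\ref{Q2}), so that the QPSWFs $\bm{\psi}_j$ and the eigenvalues $\lambda_j$ appearing in the statement are exactly those of the low-pass filtering form, i.e. $K\bm{\psi}_j = \lambda_j\bm{\psi}_j$ with $K = B_{\mathbf{W}}D_{\mathbf{D}}$. The first observation is that the convolution in the $n$-th step is precisely $B_{\mathbf{W}}$ applied to $\bm{g}_{n-1}$, since the sinc product $\frac{\sin Wx}{\pi x}\frac{\sin Wy}{\pi y}$ is real-valued and is the convolution kernel of $B_{\mathbf{W}}$; and, using $\bm{g} = D_{\mathbf{D}}\bm{f}$, the update of $\bm{g}_{n-1}$ reads $\bm{g}_{n-1} = D_{\mathbf{D}}\bm{f} + (I-D_{\mathbf{D}})\bm{f}_{n-1}$, with the convention $\bm{f}_0 = \bm{0}$.

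Next, for $n\ge 1$ the iterate $\bm{f}_{n-1}$ belongs to $\mathcal{B}_{\mathbf{W}}$ (it is an output of $B_{\mathbf{W}}$, which is idempotent, and $\bm{f}_0=\bm{0}$ is trivially bandlimited), hence $B_{\mathbf{W}}\bm{f}_{n-1} = \bm{f}_{n-1}$ and
\[ \bm{f}_n = B_{\mathbf{W}}D_{\mathbf{D}}\bm{f} + \bm{f}_{n-1} - B_{\mathbf{W}}D_{\mathbf{D}}\bm{f}_{n-1} = \bm{f}_{n-1} + K(\bm{f}-\bm{f}_{n-1}). \]
Introducing the error $\bm{e}_n := \bm{f}-\bm{f}_n$, this says $\bm{e}_n = (I-K)\bm{e}_{n-1}$, and since $\bm{f}\in\mathcal{B}_{\mathbf{W}}$ we have $\bm{e}_0 = \bm{f}$, so $\bm{e}_n = (I-K)^n\bm{f}$. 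Expanding $\bm{f} = \sum_{j=0}^{\infty} a_j\bm{\psi}_j$ in the complete orthonormal system of Proposition~\ref{Main_Proposition} and using $K\bm{\psi}_j = \lambda_j\bm{\psi}_j$ from Theorem~\ref{Theorem_2}, one obtains $(I-K)^n(a_j\bm{\psi}_j) = a_j(1-\lambda_j)^n\bm{\psi}_j$ (the real scalar $(1-\lambda_j)^n$ commutes with $a_j\in\H$, and $K$, being integration against a real kernel, commutes with left multiplication by constant quaternions). Hence $\bm{e}_n = \sum_{j=0}^{\infty} a_j(1-\lambda_j)^n\bm{\psi}_j$ and $\bm{f}_n = \bm{f}-\bm{e}_n$, which is the claimed formula; an induction on $n$ yields the same identity, the base case $n=0$ being simply $\bm{f} = \sum_j a_j\bm{\psi}_j$.

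The step that needs the most care is the interchange of the operator $(I-K)^n$ with the infinite sum $\sum_j a_j\bm{\psi}_j$. This is legitimate because $\{\bm{\psi}_j\}_{j=0}^{\infty}$ is orthonormal and complete in $\mathcal{L}^2(\R^2,\H)$, the partial sums converge in the $\mathcal{L}^2$-norm, and $K = B_{\mathbf{W}}D_{\mathbf{D}}$ is a bounded linear operator on $\mathcal{L}^2(\R^2,\H)$, so it may be applied termwise. Apart from this, and the bookkeeping with the elementary identities $B_{\mathbf{W}}^2 = B_{\mathbf{W}}$ and $B_{\mathbf{W}}\bm{f} = \bm{f}$ for $\bm{f}\in\mathcal{B}_{\mathbf{W}}$, there are no further obstacles; in particular the non-commutativity of $\H$ plays no role here because the kernel $k$ is real-valued.
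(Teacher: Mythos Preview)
Your proof is correct and follows essentially the same approach as the paper: both reduce the iteration to the action of the low-pass operator $K=B_{\mathbf{W}}D_{\mathbf{D}}$ on a single QPSWF (using the low-pass and all-pass filtering forms), establish the multiplier $(1-\lambda_j)^n$ by induction, and then pass to the full expansion by linearity. The paper carries out the induction on one eigenfunction $\bm{\psi}_m$ and then superposes, whereas you package the same computation as the operator identity $\bm{e}_n=(I-K)^n\bm{f}$, but the underlying argument is the same.
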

\begin{proof}
As we have known, for any $\mathbf{W}$-bandlimited $\H$-valued signal $\bm{f}$ can be expanded into a series
$\bm{f}(x,y)=\sum_{j=0}^{\infty}a_j\bm{\psi}_n(x,y).$
Without loss of generality, we suppose $\bm{f}(x,y)=\bm{\psi}_m(x,y)$.
We shall show that $$\bm{f}_n(x,y)=(1-(1-\lambda_m)^n)\bm{\psi}_m(x,y)=:C_n\bm{\psi}_m(x,y).$$
For $n=1$, $\bm{f}_1(x,y)=(1-(1-\lambda_m)^1)\bm{\psi}_m(x,y)$ is true. Suppose that is true for $n=k$,
then we must show that is true for $k+1$.
Since
\begin{eqnarray*}
\bm{g}_{k}(x,y)&=&\bm{g}(x,y)+\bm{f}_{k}(x,y)(1-\chi_{\mathbf{D}}(x,y))\\
&=&\bm{\psi}_m(x,y)\chi_{\mathbf{D}}(x,y)+ C_k\bm{\psi}_m(x,y)(1-\chi_{\mathbf{D}}(x,y)),
\end{eqnarray*}
then
\begin{eqnarray*}
\bm{f}_{k+1}(x,y)&=&\bm{g}_{k}(x,y)\ast \left(\frac{\sin Wx}{\pi x}\frac{\sin Wy}{\pi y}\right)\\
&=&\left(C_k\bm{\psi}_m(x,y)+ (1-C_k)\bm{\psi}_m(x,y)\chi_{\mathbf{D}}(x,y)\right)\ast \left(\frac{\sin Wx}{\pi x}\frac{\sin Wy}{\pi y}\right).
\end{eqnarray*}
From the low-pass filtering form and  all-pass filtering form of QPSWFs, we have
\begin{eqnarray*}
\bm{f}_{k+1}(x,y)=C_k\bm{\psi}_m(x,y)+(1-C_k)\lambda_m\bm{\psi}_m(x,y)=C_{k+1}\bm{\psi}_m(x,y).
\end{eqnarray*}
Here, we get an  iteration equation about $C_k$ and $C_{k+1}$, i.e.,  $$C_{k+1}=C_k+(1-C_k)\lambda_m.$$
As we know $C_1=\lambda_m$, then directly computation shows that  $C_{k+1}=1-(1-\lambda_m)^{k+1}$.
That means $\bm{f}_n(x,y)=C_n\bm{\psi}_m(x,y)=(1-(1-\lambda_m)^n)\bm{\psi}_m(x,y)$ for $\bm{f}(x,y)=\bm{\psi}_m(x,y)$.
Applying this results to $\bm{f}(x,y)=\sum_{j=0}^{\infty}a_j\bm{\psi}_n(x,y)$, we conclude that
\begin{eqnarray*}
\bm{f}_n(x,y)=\sum_{j=0}^{\infty} (1-(1-\lambda_j)^n)a_j\bm{\psi}_j(x,y)=\bm{f}(x,y)-\sum_{j=0}^{\infty}a_j(1-\lambda_j)^n\bm{\psi}_j(x,y).
\end{eqnarray*}
\end{proof}

Since we have know the relationship between $\bm{f}_n$ and $\bm{f}$ from Lemma \ref{Lem:fandfn},
to show $\bm{f}_n\rightarrow \bm{f}$, with $n\rightarrow\infty$ can also be rewritten as
$\bm{f}-\bm{f}_n\rightarrow 0$, with $n\rightarrow\infty$.
Here we denote the error $e_n(x,y)$ between $\bm{f}_n$ and $\bm{f}$ as
\begin{eqnarray}
e_n(x,y):=\bm{f}(x,y)-\bm{f}_n(x,y)=\sum_{j=0}^{\infty}a_j(1-\lambda_j)^n\bm{\psi}_j(x,y).
\end{eqnarray}
As the $\bm{\psi}_j(x,y)$ are orthogonal in $\R^2$, then the energy $E_n$ of error $e_n$ is
\begin{eqnarray*}
E_n&:=&\int_{\R^2}e_n(x,y)e_n^*(x,y)dxdy
=\int_{\R^2}  \left|\sum_{j=0}^{\infty}a_j(1-\lambda_j)^n\bm{\psi}_j(x,y)\right|^2dxdy\\
&=& \sum_{j=0}^{\infty}a_j^2(1-\lambda_j)^{2n}.
\end{eqnarray*}
Since $E= \sum_{j=0}^{\infty}a_j^2<\infty$, then
for any $\varepsilon >0$, there exists $N$, such that
$\sum_{j>N}a_j^2<\varepsilon$.
On the other hand, $\lambda_i$ are monotonically decreasing as $j\rightarrow 0$. Then
$1-\lambda_j\leq 1-\lambda_N$, for $k\leq N$.
Then we have
\begin{eqnarray*}
E_n&=&\left(\sum_{j=0}^{N}+\sum_{j=N+1}^{\infty}\right)a_j^2(1-\lambda_j)^{2n}
\leq(1-\lambda_N)^{2n} \sum_{j=0}^{N}a_j^2+\sum_{j=N+1}^{\infty}a_j^2\\
&\leq&
(1-\lambda_N)^{2n} E+\varepsilon.
\end{eqnarray*}
That means $E_n\rightarrow 0$ with $n\rightarrow \infty$, because $0<1-\lambda_N<1$.

Another aspect to find is that $e_n(x,y)$ can also be obtained as follows
\begin{eqnarray*}
e_n(x,y)=\frac{1}{2\pi}\int_{\mathbf{W}} e^{\mathbf{i}ux} \left(\F(\bm{f})(u,v)-\F(\bm{f}_n)(u,v)\right)e^{\mathbf{j}vy} dudv,
\end{eqnarray*}
where $\F(\bm{f})$ and $\F(\bm{f}_n)$ are the QFT of $\bm{f}$ and $\bm{f}_n$, respectively. Then we have
\begin{eqnarray*}
|e_n(x,y)|^2&=&\frac{1}{4\pi^2}\left|\int_{\mathbf{W}} e^{\mathbf{i}ux} \left(\F(\bm{f})(u,v)-\F(\bm{f}_n)(u,v)\right)e^{\mathbf{j}vy} dudv\right|^2\\
&\leq&\frac{1}{4\pi^2}\int_{\mathbf{W}} \left|e^{\mathbf{i}ux}\right|^2dudv
\int_{\mathbf{W}} \left|\left(\F(\bm{f})(u,v)-\F(\bm{f}_n)(u,v)\right)e^{\mathbf{j}vy}\right|^2 dudv\\
&=&\frac{W}{\pi^2}E_n.
\end{eqnarray*}
That means
$0\leq|e_n(x,y)|\leq\sqrt{\frac{WE_n}{\pi^2}}\rightarrow 0$, with $n\rightarrow\infty$.
Then we have shown that $\bm{f}-\bm{f}_n\rightarrow 0$, with $n\rightarrow\infty$.

\section{Conclusion}
\label{S7}
In the present paper, we develop the definitions of QPSWFs.
The various properties of QPSWFs such as orthogonality, double orthogonality  are established.
Using the new QPSWFs, we established the
energy distribution for hypercomplex signal (special quaternionic signal) in the QFT domain.
In particular, if a finite energy quaternionic signal is given,
the possible proportions of its energy in a finite time-domain and a finite frequency-domain are found,
as well as the signals which do the best job of simultaneous time and frequency concentration.
It is shown that the QPSWFs will reveal the characteristics of the extreme energy distribution of the original quaternionic signal
and its QFT in different regions.
We also apply the QPSWFs to the bandlimited signals extrapolation problem.
Further investigations on this topic under different linear integral transformation
are now under investigation and will be reported in a forthcoming paper.

\section*{Acknowledgment}
\label{S7}
The first and second authors acknowledge financial support from the National Natural
Science Funds No. 11401606 and University of Macau No. MYRG2015-00058-L2-FST and the Macao Science and Technology
Development Fund FDCT/099/2012/A3. The third author acknowledges financial support from the Asociaci\'on Mexicana de Cultura, A. C.

\section*{Appendix A}
This part proof the Theorem \ref{timezero}.
\begin{proof}
As we know if $\xi =0$, then $\eta_Q \neq 1$ by Lemma \ref{Lementire}.
Although $\eta_Q$ cannot get the value of $1$,
in the following we find the underlying signal for which $\eta_Q$ is infinitely close to $1$.

Let $\mathcal{A} :=\left\{\bm{g}\in \mathcal {L}^2(\R^2,\H) :
\|\bm{g}\|_{\mathcal{L}^2}=1, \,
\|D_{\mathbf{T}}\bm{g}\|_{\mathcal{L}^2} = \xi \right\}$ be a  given function class.
Here, we just consider the condition of $\xi=0$.
Construct a signal $\bm{g}^*(x,y)$  defined as follows
\begin{eqnarray}\label{closeone}
\bm{g}^*(x,y) = \frac{\bm{\psi}_{n} (x,y)-D_{\mathbf{T}}\bm{\psi}_n (x,y)}{\sqrt{1-\lambda_{n}}},
\end{eqnarray}
where $\lambda_{n}$ is the $(n+1)$th eigenvalue of Eq. (\ref{Q2}) and
$\bm{\psi}_{n}$ its corresponding eigenfunction. Direct computations show that
\begin{eqnarray*}
&& \|\bm{g}^*\|_{\mathcal{L}^2}^2 \\
&=& \frac{1}{1-\lambda_{n}} \int_{\R^2} \Big(\bm{\psi}_{n} (x,y) -
D_{\mathbf{T}}\bm{\psi}_n (x,y)\Big)\overline{ \Big( \bm{\psi}_{n} (x,y) - D_{\mathbf{T}}\bm{\psi}_n (x,y) \Big)}dxdy \\
&=& \frac{1}{1-\lambda_{n}} \Big( \int_{\R^2}\bm{\psi}_{n} (x,y)
\overline{\bm{\psi}_{n} (x,y)}dxdy- \int_{\R^2}\bm{\psi}_{n} (x,y) \overline{D_{\mathbf{T}}\bm{\psi}_n (x,y)} dxdy \\
&&  -\int_{\R^2}D_{\mathbf{T}}\bm{\psi}_n (x,y)
\overline{\bm{\psi}_{n} (x,y)}dxdy+ \int_{\R^2}D_{\mathbf{T}}\bm{\psi}_n (x,y) \overline{D_{\mathbf{T}}\bm{\psi}_n (x,y)}dxdy \Big) \\
&=& \frac{1-2\lambda_{n} +\lambda_{n}} {1-\lambda_{n}}= 1,
\end{eqnarray*}
and $\xi =\|D_{\mathbf{T}}\bm{g}^*\|_{\mathcal{L}^2} = 0 $.
Hence $\bm{g}^*\in  \mathcal{A}$.

Now we compute  $\|B_{\mathbf{W}}\bm{g}^* \|_{\mathcal{L}^2}^2$.
Since $\bm{\psi}_{n} (x,y) \in \mathcal {B}_{\mathbf{W}}$,  it follows that
\begin{eqnarray}
B_{\mathbf{W}}\bm{g}^*(x,y) =\frac{\bm{\psi}_{n} (x,y)
-B_{\mathbf{W}}D_{\mathbf{T}} \bm{\psi}_n (x,y)} {\sqrt{1-\lambda_{n}}}.
\end{eqnarray}
 Here, the QFT of $B_{\mathbf{W}}\bm{g}^*(x,y)$ is given as follows
\begin{eqnarray}
\mathcal {F}(B_{\mathbf{W}}\bm{g}^*)
=
D_\mathbf{W}\Bigl( \mathcal {F}(g^*_0)+\mathbf{i}\mathcal {F}(g^*_1)+
\mathcal {F}(g^*_2)\mathbf{j}+\mathbf{i}\mathcal {F}(g^*_3)\mathbf{j}\Bigr),
\end{eqnarray}
and we want to compute
\begin{eqnarray}\label{BWQ}
 \|\mathcal {F}(B_{\mathbf{W}}\bm{g}^*)\|_Q^2=\int_{\mathbf{W}}|\mathcal {F}(B_{\mathbf{W}}\bm{g}^*)|_Q^2 dudv
=\sum_{i=0}^3\int_{\mathbf{W}}\Bigl(|\mathcal {F}(B_{\mathbf{W}}g^*_i)|^2\Bigr)dudv,
\end{eqnarray}
where for $i=0,1,2,3,$
\begin{eqnarray}
B_{\mathbf{W}}g^*_i(x,y)=\frac{\psi_{n,i}(x,y)-
B_{\mathbf{W}}D_{\mathbf{T}}\psi_{n,i} (x,y)}{2(\sqrt{1-\lambda_{n}})},
\end{eqnarray}
and the QFT of
$B_{\mathbf{W}}g^*_i(x,y)$ satisfies that
$ \mathcal {F}\Big(B_{\mathbf{W}}g^*_i\Big)=D_\mathbf{W}\Big(\mathcal {F}(g^*_i)\Big)$.
Hence, in order to compute $ \|\mathcal {F}(B_{\mathbf{W}}\bm{g}^*)\|_Q^2$ in Eq. (\ref{BWQ}),
we need to compute each term of
$\int_{\R^2}|B_{\mathbf{W}}g^*_i|^2dxdy$, for $i=0,1,2,3$.

Since for real-valued signals $B_{\mathbf{W}}g^*_i(x,y)$ and their QFT $\mathcal {F}(B_{\mathbf{W}}g^*_i)$,
Parseval's theorem holds \cite{CKL2015}, then for all $i=0,1,2,3$
\begin{eqnarray}
\int_{\R^2}|B_{\mathbf{W}}g^*_i|^2dxdy=\int_{\mathbf{W}}|\mathcal {F}(B_{\mathbf{W}}g^*_i)|^2 dudv
\end{eqnarray}

Having in mind that for $i=0,1,2,3$
\begin{eqnarray*}
B_{\mathbf{W}}D_{\mathbf{T}}\psi_{n,i}(x,y)
&=&\frac{1}{(2\pi) ^2}\frac{1}{\mathbf{i}^n}
\int_{\mathbf{W}}\int_{ \mathbf{T}}
e^{-\mathbf{i}ux}e^{\mathbf{i}us}
\psi_{n,i}(s ,t )e^{\mathbf{j}vt}e^{-\mathbf{j}vy}
dsdtdudv\frac{1}{\mathbf{j}^n}\\
&=&\mathbf{i}^n\Big(\int_{ \mathbf{T}}
\frac{\sin W(x-s)}{\pi(x-s)}\psi_{n,i}(s,t)
\frac{\sin W(y-t)}{\pi(y-t)}
dsdt\Big)\mathbf{j}^n.
\end{eqnarray*}

straightforward computations show that
\begin{eqnarray} \label{EqBWintegral}
&&\int_{\R^2}|B_{\mathbf{W}}g^*_i|^2dxdy \\  \nonumber
&=&\frac{1}{4(1-\lambda_{n})}\int_{\R^2}
\Big(\psi_{n,i} (x,y)
-B_{\mathbf{W}}D_{\mathbf{T}}\psi_{n,i}(x,y)\Big)\overline{\Big(\psi_{n,i} (x,y)
-B_{\mathbf{W}}D_{\mathbf{T}}\psi_{n,i} (x,y)\Big)}dxdy\\ \nonumber
&=&\frac{1}{4(1-\lambda_{n})}
\Big(\int_{\R^2}\psi_{n,i} (x,y)
\overline{\psi_{n,i} (x,y)}dxdy-\int_{\R^2}\psi_{n,i} (x,y)
\overline{B_{\mathbf{W}}D_{\mathbf{T}}\psi_{n,i} (x,y)}dxdy\\ \nonumber
&&-\int_{\R^2}B_{\mathbf{W}}D_{\mathbf{T}}\psi_{n,i} (x,y)
\overline{\psi_{n,i} (x,y)}dxdy+\int_{\R^2}B_{\mathbf{W}}D_{\mathbf{T}}\psi_{n,i} (x,y)
\overline{B_{\mathbf{W}}D_{\mathbf{T}}\psi_{n,i} (x,y)}dxdy\Big)\\ \nonumber
&=& \left\{
\begin{array}{ll}
\frac{1-2\lambda_{n} +\lambda_{n} ^2}
{4(1-\lambda_{n})}, & n=2k, ~~~~~~k=1,2,\cdots\\
[1.5ex]
\frac{1+\lambda_{n} ^2}
{4(1-\lambda_{n})},  &n=2k-1,~k=1,2,\cdots.
\end{array}\right. \nonumber
\end{eqnarray}
Here, the second equality for Eq. (\ref{EqBWintegral}) have four terms integral.
The first one is the orthogonality of QPSWFs.
Computations of the second integral show that
\begin{eqnarray*}
&& \int_{\R^2}
\psi_{n,i}(x,y)
\overline{B_{\mathbf{W}}D_{\mathbf{T}}\psi_{n,i}(x,y)}dxdy\\
&=&\int_{\R^2}\int_{ \mathbf{T}}\psi_{n,i}(x ,y )
\frac{\sin W(s-x)}{\pi (s-x)}(-\mathbf{j})^n\overline{\psi_{n,i}(s, t )}(-\mathbf{i})^n
\frac{\sin W(t-y)}{\pi(t-y)}dsdtdxdy\\
&=&\lambda_{n}\int_{\R^2}\psi_{n,i}(x ,y )(-\mathbf{j})^n
\overline{\psi_{n,i}(x ,y )}(-\mathbf{i})^ndxdy
=\lambda_{n}(-\mathbf{j})^n(-\mathbf{i})^n.
\end{eqnarray*}
Similarly, we can immediately obtain the third integral that
\begin{eqnarray}
\int_{\R^2}B_{\mathbf{W}}D_{\mathbf{T}}\psi_{n,i}(x,y)
\overline{\psi_{n,i}(x,y)}dxdy=
\lambda_{n}\mathbf{i}^n\mathbf{j}^n.
\end{eqnarray}
Then  for  the two cross-terms for the second equality in Eq. (\ref{EqBWintegral}) have
\begin{eqnarray*}
 &&\int_{\R^2}\psi_{n,i}(x,y)\overline{B_{\mathbf{W}}D_{\mathbf{T}}\psi_{n,i}(x,y)}dxdy
+\int_{\R^2}B_{\mathbf{W}}D_{\mathbf{T}}\psi_{n,i}(x,y)\overline{\psi_{n,i}(x,y)}dxdy\\
&=&\lambda_{n}(-\mathbf{j})^n(-\mathbf{i})^n + \lambda_{n}\mathbf{i}^n\mathbf{j}^n
=\left\{
\begin{array}{ll}
2\lambda_{n}, & n=2k, ~k=1,2,\cdots\\
[1.5ex]
0,  &n=2k-1,~k=1,2,\cdots
\end{array}\right.
\end{eqnarray*}

The last integral for the second equality in Eq. (\ref{EqBWintegral}) used the Lemma \ref{expart} to compute that
\begin{eqnarray*}
&&\int_{\R^2}
B_{\mathbf{W}}D_{\mathbf{T}}\psi_{n,i}(x,y)
\overline{B_{\mathbf{W}}D_{\mathbf{T}}\psi_{n,i}(x,y)}dxdy\\
&=&\int_{\R^2}
\Big(\mathbf{i}^n\int_{ \mathbf{T}}
\frac{\sin W(x-s)}{\pi(x-s)}
\psi_{n,i}(s ,t )\frac{\sin W(y-t)}{\pi(y-t)}
dsdt\mathbf{j}^n\Big)\\
&&\Big((-\mathbf{j})^n\int_{ \mathbf{T}}
\frac{\sin W(x-z)}{\pi(x-z)}
\overline{\psi_{n,i}(z ,w )}\frac{\sin W(y-w)}{\pi(y-w)}
dzdw(-\mathbf{i})^n\Big)dxdy\\
&=&\mathbf{i}^n\Big(\int_{ \mathbf{T}}\int_{ \mathbf{T}}
\psi_{n,i}(s ,t )
\overline{\psi_{n,i}(z ,w )}
\frac{\sin W(s-z)}{\pi(s-z)}\frac{\sin W(t-w)}{\pi(t-w)}
dsdtdzdw\Big)(-\mathbf{i})^n\\
&=&\lambda_{n}\mathbf{i}^n\Big(
\int_{ \mathbf{T}}\psi_{n,i}(z ,w )
\overline{\psi_{n,i}(z ,w )}dzdw\Big)(-\mathbf{i})^n
=\lambda_{n} ^2.
\end{eqnarray*}
Using the above results of Eq. (\ref{EqBWintegral}), there holds
\begin{eqnarray*}
\eta_Q^2&=&\|\mathcal {F}(B_{\mathbf{W}}\bm{g}^*)\|_Q^2
=\sum_{i=0}^3\int_{\mathbf{W}}|\mathcal {F}(g^*_i)|^2 dudv\\
&=&\sum_{i=0}^3\int_{\R^2}|B_{\mathbf{W}}g^*_i|^2dxdy=
\left\{
\begin{array}{ll}
\frac{1-2\lambda_{n} +\lambda_{n} ^2}
{1-\lambda_{n}}, & n=2k, ~~~~~~k=1,2,\cdots\\
[1.5ex]
\frac{1+\lambda_{n} ^2}
{1-\lambda_{n}},  &n=2k-1,~ k=1,2,\cdots.
\end{array}\right.
\end{eqnarray*}
Therefore $\bm{g}^*\in  \mathcal{A}$ and $\eta_Q = \|B_{\mathbf{W}}\bm{g}^* \|_Q=\sqrt{1-\lambda_{n}}$, $\forall ~n=2k,~k=1,2,\cdots$.
As $\{\lambda_{n} \}_{n=0}^{\infty}$ is monotone decreasing in the interval $(0,1)$,
$\lambda_{n} $ can be arbitrarily close to $0$. Thus  $\eta_Q$ can be arbitrarily close to $1$.

On the other hand, it is clear that when both energy ratios $\xi$ and $\eta_Q$ are equal to $0$,
the underlying quaternionic signal must be the identically zero signal.
Nevertheless, we are able to finding a signal that is not identically zero as such $\eta_Q$ is infinitely close to $0$.
For any $r \in \R$ and the $\bm{g}^*(x,y)$ in the Eq. (\ref{closeone}),
we note the QFT of $\bm{g}^*(x,y)$ as $\mathcal {F}(\bm{g}^*)$,
where $\mathcal {F}(\bm{g}^*)=\mathcal {F}(g^*_0)+\mathbf{i}\mathcal {F}(g^*_1)+
\mathcal {F}(g^*_2)\mathbf{j}+\mathbf{i}\mathcal {F}(g^*_3)\mathbf{j}$.
We want to find a new function $\bm{f}(x,y)$ such that the QFT of $\bm{f}(x,y)$ satisfy that
\begin{eqnarray}
\mathcal {F}(\bm{f})(u,v)=\mathcal {F}(\bm{g}^*)(u-r,v).
\end{eqnarray}
If there exist that $\bm{f}(x,y)$, then
\begin{eqnarray*}
\eta_Q ^2&=&\|\mathcal {F}(B_{\mathbf{W}}\bm{f})\|_Q^2=\int_{ \mathbf{W}} \sum_{i=0}^{3}
|\mathcal {F}(f_i)(u,v)|^2 dudv\\
&=&\int_{ \mathbf{W}} \sum_{i=0}^{3}
|\mathcal {F}(g_i)(u-r,v)|^2 dudv=\int_{-W -r}^{W -r}\int_{-W}^{W}
 \sum_{i=0}^{3}
|\mathcal {F}(g_i)(u,v)|^2 dudv.
\end{eqnarray*}
Here, $\eta_Q $ is continuous over $r $ for fixed $\mathbf{W}$.
For $\mathcal {F} (\bm{g}^*)(u,v) \in \mathcal {L}^2(\R^2;\H)$,
 $\eta_Q $ approaches zero as $r$ approaches infinity.
 Thus,  $\eta_Q $ can be  arbitrarily close to $0$.

Now we need  to present the formula of that $\bm{f}(x,y)$ and check whether that $\bm{f}(x,y)$ belong to $\mathcal{A}$.
Since $\bm{f}(x,y)$ satisfy that $\mathcal {F}(\bm{f})(u,v)=\mathcal {F}(\bm{g}^*)(u-r,v)$, it follows that
 \begin{eqnarray*}
\bm{f}(x,y)
 &=&\frac{1}{2\pi}\int_{\R^2}e^{\mathbf{i}ux}\mathcal {F}(\bm{f})(u,v)e^{\mathbf{j}vy}dudv
=\frac{1}{2\pi}\int_{\R^2}e^{\mathbf{i}ux}\mathcal {F}(\bm{g}^*)(u-r,v)e^{\mathbf{j}vy}dudv\\
 &=&\frac{1}{2\pi}\int_{\R^2}e^{\mathbf{i}(u+r)x}\mathcal {F}(\bm{g}^*)(u,v)e^{\mathbf{j}vy}dudv
=e^{\mathbf{i}rx}\frac{1}{2\pi}\int_{\R^2}e^{\mathbf{i}ux}\mathcal {F}(\bm{g}^*)(u,v)e^{\mathbf{j}vy}dudv\\
 &=&e^{\mathbf{i}rx}\bm{g}^*(x,y).
  \end{eqnarray*}
It is easy check that
 \begin{eqnarray*}
\|\bm{f}\|_{\mathcal{L}^2}^2
=\int_{\R^2} |e^{\mathbf{i}rx}\bm{g}^*(x,y)|^2 dxdy
=\|\bm{g}^* \|_{\mathcal{L}^2}^2=1,
 \end{eqnarray*}
and
\begin{eqnarray*}
\xi
=\int_{\mathbf{T}} |e^{\mathbf{i}rx}\bm{g}^*(x,y)|^2 dxdy
=\int_{\mathbf{T}} |\bm{g}^*(x,y)|^2 dxdy=0.
\end{eqnarray*}
Thus, $ \bm{f}(x,y)\in \mathcal{A}$ and $\eta_Q $ can be  arbitrarily close to $0$ when $r\rightarrow \infty$.
That is, $0\leq\eta_Q <1$.
This completes the proof.
\end{proof}

\section*{Appendix B}
This part proof the Theorem \ref{ThUP}.
\begin{proof}
Let us recall the function class
\begin{eqnarray*}
\mathcal{A} = \left\{\bm{g}\in \mathcal {L}^2(\R^2,\H) :
\|\bm{g}\|_{\mathcal{L}^2}=1, \, \|D_{\mathbf{T}}\bm{g}\|_{\mathcal{L}^2} = \xi \right\}.
\end{eqnarray*}
Let $\bm{g}(x,y)\in \mathcal{A}$ and $\sqrt{\lambda _{0}}\leq \xi < 1$.
Now, take its projections in $\mathcal {D}_\mathbf{T}$ and $\mathcal {B}_\mathbf{W}$, respectively.
We  can decompose $\bm{g}$ as follows
\begin{eqnarray} \label{Decomposition}
\bm{g} = \lambda D_{\mathbf{T}}\bm{g} +\mu B_{\mathbf{W}}\bm{g} +\bm{g}^*,
\end{eqnarray}
where $\lambda$, $\mu \in \R$, $< \bm{g}^* ,D_{\mathbf{T}}\bm{g} >_{\mathcal{L}^2}
=\bm{0}$ and $< \bm{g}^* ,B_{\mathbf{W}}\bm{g} >_{\mathcal{L}^2} = \bm{0}$.
The $g(x,y)\in \mathcal{A}_{\xi}$ can also decomposed by
 \begin{eqnarray}
\bm{g} =g_0+\mathbf{i}g_1+g_2\mathbf{j}+\mathbf{i}g_3\mathbf{j}.
\end{eqnarray}
Then we need to consider $i=0,1,2,3$,
\begin{eqnarray}
g_i = \lambda D_{\mathbf{T}}g_i +\mu B_{\mathbf{W}}g_i +g^*_i,
\end{eqnarray}
Now, we compute the scalar inner product Eq. (\ref{Real_Inner_Product}) of the decomposition Eq. (\ref{Decomposition}),
respectively, with $g_i$, $D_{\mathbf{T}}g_i$, $B_{\mathbf{W}}g_i $ and $g^*_i $. Direct computations show that
\begin{eqnarray*}
\|g_i\|_{\mathcal{L}^2}^2
&=&  \lambda \|D_{\mathbf{T}}g_i\|_{\mathcal{L}^2}^2
+\mu \|B_{\mathbf{W}}g_i\|_{\mathcal{L}^2}^2+< g_i ,g^*_i >,\\
\|D_{\mathbf{T}}g_i\|_{\mathcal{L}^2}^2
&=&\lambda \|D_{\mathbf{T}}g_i\|_{\mathcal{L}^2}^2+\mu < B_{\mathbf{W}}g_i ,D_{\mathbf{T}}g_i>_0,\\
\|B_{\mathbf{W}}g_i\|_{\mathcal{L}^2}^2
&=&  \lambda < D_{\mathbf{T}}g_i ,B_{\mathbf{W}}g_i>_0+\mu \|B_{\mathbf{W}}g_i\|_{\mathcal{L}^2}^2,\\
< g_i,g^*_i >
&=& <g^*_i ,g^*_i >.
\end{eqnarray*}
For every $\H$-valued function $\bm{f}$, the equation
$\|\bm{f}\|_{\mathcal{L}^2}^2 =\|\mathcal {F}(\bm{f}) \|_Q ^2$ holds.
Then we can get the following results by integrating the equations above $i=0,1,2,3$
\begin{eqnarray*}
\|\mathcal {F}(\bm{g}) \|_Q ^2
&=&  \lambda \|\mathcal {F}(D_{\mathbf{T}}\bm{g})\|_Q ^2+
\mu \|\mathcal {F}(B_{\mathbf{W}}\bm{g})\|_Q ^2+\|\mathcal {F}(\bm{g}^*)\|_Q ^2,\\
\|\mathcal {F}(D_{\mathbf{T}}\bm{g})\|_Q ^2
&=&  \lambda \|\mathcal {F}(D_{\mathbf{T}}\bm{g})\|_Q ^2+
\mu \sum_{i=0}^3< B_{\mathbf{W}}g_i ,D_{\mathbf{T}}g_i>_0,\\
\|\mathcal {F}(B_{\mathbf{W}}\bm{g})\|_Q ^2
&=&  \lambda \sum_{i=0}^3< D_{\mathbf{T}}g_i ,B_{\mathbf{W}}g_i>_0 +
\mu \|\mathcal {F}(B_{\mathbf{W}}\bm{g})\|_Q ^2. \\
\end{eqnarray*}
Substituting the facts $\xi=\|\mathcal {F}(D_{\mathbf{T}}\bm{g})\|_Q$, $\eta_Q=\|\mathcal {F}(B_{\mathbf{W}}\bm{g})\|_Q$
and $\|\mathcal {F}(\bm{g}^*)\|_Q=\|\bm{g}^*\|_{\mathcal{L}^2}$ into the equations above, it follows that
\begin{eqnarray*}
1 &=&  \lambda \xi^2+\mu \eta_Q^2+\|\bm{g}^*\|_{\mathcal{L}^2}^2,\\
\xi^2 &=& \lambda \xi^2+\mu \sum_{i=0}^3< B_{\mathbf{W}}g_i ,D_{\mathbf{T}}g_i>_0,\\
\eta_Q^2 &=&  \lambda \sum_{i=0}^3< D_{\mathbf{T}}g_i ,B_{\mathbf{W}}g_i>_0 +\mu \eta_Q^2,
\end{eqnarray*}
By eliminating $\|\bm{g}^*\|_{\mathcal{L}^2}^2$,
$\lambda $ and $\mu $ from the above equations, we obtain that
\begin{eqnarray}\label{eta}
&&\eta_Q^2-\sum_{i=0}^3< B_{\mathbf{W}}g_i ,D_{\mathbf{T}}g_i>_0-\sum_{i=0}^3< D_{\mathbf{T}}g_i ,B_{\mathbf{W}}g_i>_0\\ \nonumber
&=&~-\xi^2 +\left(1-\|\bm{g}^*\|_{\mathcal{L}^2}^2\right)\cdot \left(1-
 \frac{(\sum_{i=0}^3< B_{\mathbf{W}}g_i ,D_{\mathbf{T}}g_i>_0)
(\sum_{i=0}^3< D_{\mathbf{T}}g_i ,B_{\mathbf{W}}g_i>_0)}{\xi^2\eta_Q^2}\right),
\end{eqnarray}
where $\xi\eta_Q\neq 0$.
For $\sum_{i=0}^3< B_{\mathbf{W}}g_i ,D_{\mathbf{T}}g_i>_0$ and
$\sum_{i=0}^3< D_{\mathbf{T}}g_i ,B_{\mathbf{W}}g_i>_0$ in the above equation, we conclude  that
\begin{eqnarray}
< B_{\mathbf{W}}\bm{g},D_{\mathbf{T}}\bm{g}>_0=\sum_{i=0}^3< B_{\mathbf{W}}g_i ,D_{\mathbf{T}}g_i>_0.
\end{eqnarray}
Since
\begin{eqnarray*}
\bm{g}&=&g_0 +\mathbf{i}g_1+g_2\mathbf{j}+\mathbf{i}g_3\mathbf{j},\\
D_{\mathbf{T}}\bm{g} &=&D_{\mathbf{T}}g_0+\mathbf{i}D_{\mathbf{T}}g_1+
D_{\mathbf{T}}g_2\mathbf{j}+\mathbf{i}D_{\mathbf{T}}g_3\mathbf{j},\\
B_{\mathbf{W}}\bm{g}&=&B_{\mathbf{W}}g_0+\mathbf{i}B_{\mathbf{W}}g_1+
B_{\mathbf{W}}g_2\mathbf{j}+\mathbf{i}B_{\mathbf{W}}g_3\mathbf{j},
\end{eqnarray*}
and
\begin{eqnarray*}
&&< B_{\mathbf{W}}\bm{g},D_{\mathbf{T}}\bm{g}>_0\\
&=& \mathbf{Sc}\left(\int_{\R^2}
B_{\mathbf{W}}\bm{g}(x,y)\overline{D_{\mathbf{T}}\bm{g}(x,y)}dxdy\right)\\
&=&\mathbf{Sc}\Big(\int_{\R^2}
(B_{\mathbf{W}}g_0+\mathbf{i}B_{\mathbf{W}}g_1+
B_{\mathbf{W}}g_2\mathbf{j}+\mathbf{i}B_{\mathbf{W}}g_3\mathbf{j})
\overline{(D_{\mathbf{T}}g_0+\mathbf{i}D_{\mathbf{T}}g_1+
D_{\mathbf{T}}g_2\mathbf{j}+\mathbf{i}D_{\mathbf{T}}g_3\mathbf{j})}dxdy\Big)\\
&=&\int_{\R^2}\Big(B_{\mathbf{W}}g_0D_{\mathbf{T}}g_0+B_{\mathbf{W}}g_1D_{\mathbf{T}}g_1
+B_{\mathbf{W}}g_2D_{\mathbf{T}}g_2+B_{\mathbf{W}}g_3D_{\mathbf{T}}g_3\Big)dxdy\\
&=&\sum_{i=0}^3< B_{\mathbf{W}}g_i ,D_{\mathbf{T}}g_i>_0.
\end{eqnarray*}
Then  the Eq. (\ref{eta}) becomes
\begin{eqnarray}
\eta_Q^2-2< B_{\mathbf{W}}\bm{g},D_{\mathbf{T}}\bm{g}>_0
=-\xi^2 +\Big(1-\|\bm{g}^*\|_{\mathcal{L}^2}^2\Big)
\Big(1-\frac{(< B_{\mathbf{W}}\bm{g},D_{\mathbf{T}}\bm{g}>_0)^2}{\xi^2\eta_Q^2}\Big).
\end{eqnarray}
Let $\arg(D_{\mathbf{T}}\bm{g},B_{\mathbf{W}}\bm{g})
= \arccos\frac{< D_{\mathbf{T}}\bm{g} ,B_{\mathbf{W}}\bm{g}>_0}
{\|D_{\mathbf{T}}\bm{g}\|_{\mathcal{L}^2}
\|B_{\mathbf{W}}\bm{g}\|_{\mathcal{L}^2}}$.
By Theorem \ref{Thangle},  it follows that
$$\arg(D_{\mathbf{T}}\bm{g},B_{\mathbf{W}}\bm{g})\geq \arccos \sqrt{\lambda _{0}}.$$
 We conclude that
\begin{eqnarray*}
\eta_Q^2-2\xi\eta_Q\cos\arg(D_{\mathbf{T}}\bm{g},B_{\mathbf{W}}\bm{g})
&=&-\xi^2 +\Big(1-\|\bm{g}^*\|_{\mathcal{L}^2}^2\Big)
\Big(1-\cos^2\arg(D_{\mathbf{T}}\bm{g},B_{\mathbf{W}}\bm{g})\Big)\\
&\leq& -\xi^2 +\sin^2\arg(D_{\mathbf{T}}\bm{g},B_{\mathbf{W}}\bm{g}).
\end{eqnarray*}
Simplifying the above inequality, we obtain
$
\Big(\eta_Q -\xi\cos \arg(D_{\mathbf{T}}\bm{g},B_{\mathbf{W}}\bm{g})\Big)^2 \leq
 (1- \xi ^2)\sin ^2 \arg(D_{\mathbf{T}}\bm{g},B_{\mathbf{W}}\bm{g}),
$
from which it follows that
$
\eta_Q \leq \cos \Big(\arg(D_{\mathbf{T}}\bm{g},B_{\mathbf{W}}\bm{g}) - \arccos \xi\Big),
$
 and
$
\arg(D_{\mathbf{T}}\bm{g},B_{\mathbf{W}}\bm{g})\geq \arccos \sqrt{\lambda _{0}}.
$
We conclude that
\begin{eqnarray}
\arccos\eta_Q \geq \arccos \sqrt{\lambda _{0}} - \arccos \xi.
\end{eqnarray}
The equality is attained by setting
\begin{eqnarray}
\bm{g}^*(x,y)=p\bm{\psi} _{0}(x,y)+qD_{\mathbf{T}}\bm{\psi} _0(x,y),
\end{eqnarray}
where
$p=\sqrt{\frac{1-\xi ^2}{1-\lambda_{0}}}$, and
$q=\frac{\xi }{\sqrt{\lambda_{0}}}
-\sqrt{\frac{1-\xi ^2}{1-\lambda_{0}}}$.
The proof is completed.
\end{proof}

\section*{References}

\end{document}